\newtheorem{theorem}{Theorem}[section]
\newtheorem{lemma}[theorem]{Lemma}
\newtheorem{observation}[theorem]{Observation}
\newtheorem{definition}[theorem]{Definition}
\newtheorem{proposition}[theorem]{Proposition}
\newtheorem{conjecture}[theorem]{Conjecture}
\newtheorem{claim}[theorem]{Claim}
\newtheorem{openProblem}[theorem]{Open Problem}
\begin{document}
	\title{On  the $k$-anti-traceability Conjecture}
	\author{Bin Chen\footnote{Hefei National Laboratory, Hefei, China. {\tt cbfzu03@163.com}}, Stefanie Gerke\footnote{Department of Mathematics, Royal Holloway University of London, Egham, UK. {\tt stefanie.gerke@rhul.ac.uk}}, Gregory Gutin\footnote{Department of Computer Science, Royal Holloway University of London, Egham, UK.  {\tt g.gutin@rhul.ac.uk}},\\ Hui Lei\footnote{School of Statistics and Data Science, Nankai University, Tianjin, China. {\tt hui.lei@rhul.ac.uk}}, Heis Parker-Cox\footnote{Department of Mathematics, Royal Holloway University of London, Egham, UK. {\tt heis.parker-cox.2023@live.rhul.ac.uk}},  Yacong Zhou\footnote{Department of Computer Science, Royal Holloway University of London, Egham, UK.  {\tt yacong.zhou.2021@live.rhul.ac.uk}}}
	\date{}
	\maketitle
	
	\begin{abstract}
		An oriented graph is called $k$-anti-traceable if the subdigraph induced by every subset with $k$ vertices has a hamiltonian anti-directed path. In this paper, we consider an anti-traceability conjecture. In particular we confirm this conjecture holds when $k\leq 4$. We also show that every sufficiently large $k$-anti-traceable oriented graph admits an anti-path that contains $n-o(n)$ vertices.
	\end{abstract}
	
	\section{Introduction}
	
	\noindent
	

One of the fundamental and extensively studied problems in digraph theory is finding sufficient conditions for a digraph to contain a hamiltonian oriented path of a certain kind.  
A well-known result of R\'{e}dei \cite{R} asserts that every {\em tournament} is {\em traceable}, that is, every orientation of the complete graph contains a hamiltonian directed path. Extending R\'{e}dei's result, Chen and Manalastas \cite{CM} proved that every strongly connected digraph with independence number two is traceable. Havet  further strengthened this result in \cite{Ha}.
A digraph $D$ is \emph{$k$-traceable} if its order is at least $k$ and each of its induced subdigraphs of order $k$ is traceable. Note that every $2$-traceable {\em oriented graph} (i.e., a digraph which can obtained by orienting edges of an undirected graph)  is a tournament.  In \cite{v6} van Aardt, Dunbar, Frick, Nielsen and Oellermann  formulated  the following conjecture called the \emph{Traceability Conjecture}\;(TC for short).
	\begin{conjecture} \textnormal{(\cite{v6})}\label{conj}
		For every integer $k\geq2$, every $k$-traceable oriented graph of order at least $2k-1$ is traceable.
	\end{conjecture}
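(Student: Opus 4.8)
The plan is to attack Conjecture~\ref{conj} by a combination of induction on $k$, a bound on the independence number, and a structural analysis of the strong components of the oriented graph. The base case $k=2$ is immediate: a $2$-traceable oriented graph has an arc between every pair of vertices, so it is a tournament, and R\'edei's theorem gives a hamiltonian directed path. For the general step, let $D$ be a $k$-traceable oriented graph with $n:=|V(D)|\ge 2k-1$, and suppose for contradiction that $D$ is not traceable. The first reduction is to the strongly connected case. If $D$ is not strong, look at its acyclic condensation with strong components $C_1,\dots,C_t$ listed so that all arcs between distinct components go "forward''. Since arcs only go forward, a hamiltonian directed path of any induced $k$-subset visits the components in increasing order, which already forces: every component $C_i$ with $|C_i|\ge k$ is itself $k$-traceable (and strong), and every $C_i$ with $|C_i|<k$ is traceable. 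So the obstruction to assembling a global path lies in the arcs \emph{between} consecutive components, and one has to show that $k$-traceability, together with $n\ge 2k-1$, makes the condensation essentially trivial (a single large "middle'' part).

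The second ingredient is the independence number. Any $k$ pairwise non-adjacent vertices induce an arc-less subdigraph, which for $k\ge 3$ is not traceable, so $\alpha(D)\le k-1$; and a short argument on the shape of a hamiltonian path of a $k$-subset containing a maximum independent set plus one extra vertex bootstraps this to $\alpha(D)\le k-2$ for $k\ge 4$, and presumably further. For the strongly connected case with small independence number I would invoke the Chen--Manalastas theorem (strong and $\alpha\le 2$ imply traceable) and Havet's strengthening, and more generally a statement that a strongly connected oriented graph whose order is large compared to $\alpha$ and $k$ must be traceable; this disposes of all configurations except those where $D$ is strong with $\alpha(D)$ close to the worst allowed value, or where the condensation has very few but very large components.

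The few-components case should yield to a counting/pigeonhole argument: because $n\ge 2k-1$ is roughly twice $k$, a $k$-subset can be chosen to meet both the "source side'' and the "sink side'' of the condensation in prescribed amounts, and traceability of that subset forces enough arcs across the cut to splice the component paths together into a hamiltonian path of $D$; iterating handles more than two components. The genuinely hard part — and the reason the conjecture is still open for general $k$ — is the remaining case of a strongly connected $D$ with large independence number: for $\alpha\ge 3$ strong connectivity alone does not give traceability, so there is no off-the-shelf theorem to quote, and one must exploit the \emph{global} $k$-traceability (every $k$-subset, not merely every pair) through a longest-path/rotation--extension argument, using the structure forced by the traceable $k$-subsets sitting around the two ends of a maximal path to either extend the path or conclude $n\le 2k-2$. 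Running this rotation--extension while simultaneously keeping the independence number under control is the main obstacle; this is why one realistically first settles small $k$ (as is done here for the anti-traceability variant when $k\le 4$), where the bounded independence number reduces the analysis to a finite check.
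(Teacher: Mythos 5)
This statement is the Traceability Conjecture of van Aardt, Dunbar, Frick, Nielsen and Oellermann, and the paper does not prove it: it is quoted as Conjecture~\ref{conj} precisely because it is open, and the authors state explicitly that ``the TC remains unsolved'' (the best known general bound replaces $2k-1$ by $6k-20$, and the full conjecture is verified only for $k\le 6$ and $k=8$, with a known counterexample at order $8$ for $k=7$ showing the bound $2k-1$ cannot be lowered uniformly). So there is no ``paper's own proof'' to compare against, and your proposal should be judged as a standalone attempt.

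As such, it has a genuine gap, one you yourself name: after the reductions (condensation into strong components, $\alpha(D)\le k-1$ from an independent $k$-set not being traceable, Chen--Manalastas for $\alpha\le 2$), the case of a strongly connected $D$ with $3\le \alpha(D)\le k-1$ remains, and you offer only the intention to run a rotation--extension argument ``while keeping the independence number under control,'' which is exactly the step nobody has been able to carry out. Two further points are weaker than you suggest. First, the pigeonhole argument for splicing component paths across the condensation is not routine: traceability of individual components plus $k$-traceability of the whole does not obviously yield compatible endpoints for consecutive components, and the published partial results (e.g.\ the $6k-20$ bound) require substantial work precisely here. Second, your appeal to ``a statement that a strongly connected oriented graph whose order is large compared to $\alpha$ and $k$ must be traceable'' is an appeal to a theorem that does not exist; Havet's result concerns stable sets meeting longest paths and does not supply it. Your sketch is a reasonable map of the known reductions, but it is not a proof, and the conjecture remains open.
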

Despite considerable interest, the TC remains unsolved. Improving on previous results   in \cite{v3,v6}   it was shown in  \cite{v4} that every $k$-traceable oriented graph of order at least $6k-20$ is traceable for $k\geq 4$. For $2\leq k \leq 6$ and $k=8$  it is known \cite{v1,v2} that every $k$-traceable oriented graph of order at least $k$ is traceable. For $k=7$ the situation is different as there exists a $7$-traceable graph of order $8$ that is not traceable  (but every $7$-traceable graph of order at least $9$ is traceable) \cite{v2}.

The TC has also been studied  for several classes of digraphs without specific subdigraphs, see, e.g., \cite{v5,Li,CB}.

An oriented path $P=x_1x_2\dots x_p$ in a digraph $D$ is called {\em anti-directed} (or, AT for short) if every two consecutive arcs of $P$ have opposite orientations, i.e., for every $1\le i\le p-2$ we have either $x_ix_{i+1}$ and $x_{i+2}x_{i+1}$ are arcs in $D$ (also denoted by $x_i\to x_{i+1}$ and $x_{i+2}\to x_{i+1}$) or $x_{i+1}x_i$ and $x_{i+1}x_{i+2}$ are arcs in $D$. Note that in an anti-directed path $P$, unlike in a directed path, the first arc of $P$ may be oriented from or to the initial vertex of $P.$ A {\em hamiltonian anti-directed path} of a digraph $D$ includes all vertices of $D$. A digraph is {\em anti-traceable} if it contains a  hamiltonian anti-directed path.

It is well-known that anti-directed paths behave very differently to directed paths. For example not all tournaments have a hamiltonian anti-directed path as we will see shortly. To do so we need the following definitions.
Let $q$ be a prime power with $q\equiv3\;(\text{mod}\;4)$. The \emph{Paley tournament} of order $q$, denoted by $PT_{q}$, is the tournament with vertex set $V(PT_{q})=GF(q)$ and arc set $A(PT_{q})=\{(i,j):\ j-i\;\text{is}\;\text{a}\;\text{nonzero}\;\text{square}\;\text{in}\;GF(q)\}.$ Let $RT_{n}$ be the \emph{rotational tournament} of order $n=2k+1$, that is, $V(RT_{n})=[2k+1]=\{1,2,\dots ,2k+1\}$ and $A(RT_{n})=\{(i,j):\ j=i+t \mbox{ mod } 2k+1, 1\le t\le k\},$ where $(i+t \mbox{ mod } 2k+1)\in [2k+1]$.
	Gr\"{u}nbaum \cite{Gr} proved the following:
	
	\begin{theorem} \textnormal{(\cite{Gr})}\label{tournament}
		Every tournament is anti-traceable unless it is isomorphic to one of the tournaments in $\{PT_{3},RT_{5},PT_{7}\}$.
	\end{theorem}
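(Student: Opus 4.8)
The plan is to prove the equivalent assertion that \emph{every tournament of order $n\ge 8$ is anti-traceable}, and to deal with $n\le 7$ by a direct finite check in which the three listed tournaments emerge as the only failures; combined, these give the statement. I would argue by strong induction on $n$, using the following reformulation throughout. In an anti-directed path the vertices split into two alternating classes: the \emph{local sources}, whose two incident path-arcs both leave them, and the \emph{local sinks}, whose two incident path-arcs both enter them (an endpoint having just one incident path-arc). Consequently an anti-directed Hamiltonian path of $T$ is precisely a Hamiltonian path of the bipartite graph $H_{(A,B)}$ whose parts are $A$ and $B$ and in which $ab$ is an edge exactly when $a\in A$, $b\in B$ and $a\to b$ in $T$, taken over ordered bipartitions $(A,B)$ of $V(T)$ whose two sides differ in size by at most $1$; and since in a bipartite graph with parts of sizes $t$ and $t+1$ both ends of any Hamiltonian path lie in the larger side, the parity of the endpoints looks after itself. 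So $T$ is anti-traceable if and only if some such $(A,B)$ makes $H_{(A,B)}$ traceable.

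For the base cases $n\le 7$ — and also $n=8$, which is convenient for the induction — one inspects the tournaments directly, seeking in each a balanced bipartition whose bipartite graph is traceable, and verifies that exactly $PT_3$, $RT_5$ and $PT_7$ admit none. Most of these tournaments are dispatched at once because a single anti-directed path of some $T-v$ already extends; the resistant ones are the highly symmetric examples, for which every bipartition yields a bipartite graph that is almost regular of degree about $|A|/2$, right on the boundary of traceability, and the three exceptions are precisely the small such tournaments that fall just short. For the inductive step, let $n\ge 9$ and delete a vertex $v$ that is neither a source nor a sink of $T$ (one exists as $n\ge 3$). Since $T-v$ has $n-1\ge 8$ vertices and no exceptional tournament has more than $7$, induction supplies an anti-directed Hamiltonian path $P=x_1x_2\cdots x_{n-1}$ of $T-v$, and the task is to re-insert $v$. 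One may attach $v$ at an endpoint $x$ of $P$ exactly when the arc between $v$ and $x$ agrees with the type of $x$: for instance, prepend $v$ before a source-endpoint $x_1$ when $x_1\to v$, or before a sink-endpoint $x_1$ when $v\to x_1$, and symmetrically at $x_{n-1}$. When both endpoints block this, one rotates: if $x_1$ is a source-endpoint and $x_1\to x_j$ with $x_j$ of sink type, then $x_{j-1}x_{j-2}\cdots x_1\,x_jx_{j+1}\cdots x_{n-1}$ is again an anti-directed Hamiltonian path of $T-v$, now with source-endpoint $x_{j-1}$. Iterating such rotations at both ends of $P$ (there are at least four vertices of each type on $P$) steadily enlarges the set of vertices that can serve as an endpoint, and one shows that this set must meet the in-neighbourhood or the out-neighbourhood of $v$, at which point $v$ becomes attachable.

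The crux, and the bulk of the work, is to make the re-insertion succeed in \emph{every} case. The awkward configurations are those in which $v$, or one of the endpoints of $P$, has very small in-degree or out-degree in $T$, so that it is forced into a rigid position on the path and the rotations above cannot reach a usable endpoint; handling these — by choosing the deleted vertex more carefully, by rotating at intermediate vertices, or by invoking the bipartite reformulation to produce an anti-directed Hamiltonian path of $T-v$ with a prescribed endpoint type — is exactly where the argument must be delicate. It is the breakdown of this delicate step at orders $3$, $5$ and $7$ that leaves room for the sporadic exceptions $PT_3$, $RT_5$ and $PT_7$.
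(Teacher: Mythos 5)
This statement is Gr\"unbaum's theorem, which the paper cites from \cite{Gr} and does not prove; there is therefore no in-paper proof to compare against, and your proposal has to stand on its own. As written it does not: it is a plan with the two hardest pieces left open. First, the base case. You propose to ``inspect directly'' all tournaments of order at most $8$, but there are $456$ tournaments on $7$ vertices and $6880$ on $8$ vertices up to isomorphism, so this is not something one inspects; it needs either an explicit computer search (which you neither describe nor perform) or a structural argument, and producing that argument is a large part of what Gr\"unbaum's paper actually does. The remark that ``most of these are dispatched at once'' and that only ``the highly symmetric examples'' resist is a heuristic, not a verification, and in particular nothing you write establishes that $PT_3$, $RT_5$, $PT_7$ are the \emph{only} failures.

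Second, and more seriously, the inductive step is asserted rather than proved. Your local-source/local-sink bipartite reformulation is correct, and the rotation move (reversing the initial segment at a sink-type vertex $x_j$ with $x_1\to x_j$) is a valid analogue of P\'osa rotation. But the entire content of the theorem for $n\ge 9$ is the claim that the set of endpoints reachable by such rotations must meet $N^+(v)$ or $N^-(v)$, and you explicitly defer it (``one shows that this set must meet\dots'', ``is exactly where the argument must be delicate''). Nothing in the proposal excludes the possibility that the rotation process closes up on a small set of endpoints all lying on the wrong side of $v$ --- which is exactly the phenomenon responsible for the exceptional tournaments at orders $3$, $5$ and $7$ --- so without a concrete argument that this cannot happen for $n\ge 9$ (or a careful choice of the deleted vertex that provably avoids it), the induction does not close. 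In short: the framework is sensible and consistent with how such results are proved, but the proof itself is missing.
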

	
	In \cite{Ro}, Rosenfeld strengthened Theorem~\ref{tournament} by showing that for every tournament $T_n$ of order $n\geq 9$ and every vertex $v$ of $T_{n}$, there is a hamiltonian anti-directed path in $T_{n}$ starting at $v$. The reader is referred to \cite{H-T,Fo,Tho} for more related interesting results with respect to hamiltonian oriented paths in tournaments.
	
A digraph $D$ is \emph{$k$-anti-traceable} (or, $k$-AT for short) if its order is at least $k$ and each of its induced subdigraphs of order $k$ is anti-traceable. 
Not surprisingly $k$-anti-tracable oriented graphs also behave very differently to $k$-traceable graphs. For example, it was shown in \cite{v3}  that all $k$-traceable graphs on $6k-20$ vertices have independence number at most $2$. As the following family of example shows the independence number of a $k$-anti-traceable graph can be as high as $\lceil k/2 \rceil$. Let $n\ge k\geq 16$. We construct an oriented graph $D$ on $n$ vertices by having an independent set $X$ of size $\lceil k/2 \rceil$ and a set $Y$ that forms  a tournament on $n-\lceil k/2 \rceil$ vertices. In addition there is an arc from every vertex in $Y$ to every vertex in $X$. Clearly the independence number of $D$ satisfies $\alpha(D)=|X|=\lceil k/2 \rceil$. We now show that $D$ is $k$-anti-traceable. Consider a subset $K$ of $V(D)$ of size $k$. If  $K\subseteq Y$ then by Theorem~\ref{tournament} it contains an anti-directed path. If $K$ intersects $X$ in $t\geq 1$ vertices then we {\ reserve} $t-1$ vertices in $K \cap Y$. If $k-2t+1$ is even then by Theorem~\ref{tournament} there is an anti-directed path $P=x_1x_2\dots x_{k-2t+1}$ covering all non-reserved vertices  in $K\cap Y$ and moreover, either $x_1\to x_2$ or $x_{k-2t+1} \to x_{k-2t}$. If $x_1\to x_2$, then let $u=x_1$ and $v=x_2$, otherwise, let $u=x_{k-2t+1}$ and 
$v=x_{k-2t}$. Hence we can extend $P$ to an anti-directed path containing all vertices of $K$ by starting with path $P$ in such a direction that the last vertex is $u$ 
and then alternating between the vertices in $X \cap K$ and the reserved vertices. If $k-2t+1$ is odd then chose a non-reserved vertex $a$ in $Y\cap K$ and observe that by Theorem~\ref{tournament}  there is an anti-directed path $Q=x_1x_2\dots x_{k-2t}$ consisting of all non-reserved vertices apart from $a$. Define $u$ and $v$ in $Q$ similarly to the way we  defined them in $P.$ Now, construct an anti-directed path by starting at $a$, using all vertices of $K\cap X$ and all reserved vertices and then using $Q$ starting at $u.$

\vspace{2mm}
	
	Inspired by the above results, we will focus on finding sufficient conditions for more general digraphs to be anti-traceable. 	
	\begin{openProblem}
		For any positive integer $k$, what is the minimum integer $f(k)\geq k$ such that every $k$-anti-traceable oriented graph with order at least $f(k)$ has a hamiltonian anti-directed path? (If no such finite $f(k)$ exists then we set $f(k)=\infty$.)
	\end{openProblem}
Recall that any $2$-anti-traceable oriented graph is a tournament. Thus, it follows from Theorem \ref{tournament} that $f(2)=8$. In the next section we prove that $f(3)=3$ and $f(4)=8$. In Section \ref{almost hamiltonian},  using  Szemer\'{e}di's regularity lemma, we will show that every large $k$-anti-traceable oriented graph admits an anti-directed path that contains all but $o(n)$ vertices.

\section{Values of $f(3)$ and $f(4)$}\label{Sec:k=3and4}

We first prove that $f(3)=3$ and then that  $f(4)=8.$ 

	\subsection{$f(3)=3$}\label{Sec:k=3}
	To characterise $3$-traceable graphs we need the following definition.
	For a digraph $D$ and a set $X\subseteq V(D)$, $D[X]$ denotes the subdigraph of $D$ induced by $X.$
For a digraph $D$ with vertices $v_1,\dots ,v_n$ and a sequence of $n$ digraphs $H_1,\dots ,H_n$, the {\em composition} $\hat{D}=D[H_1,\dots ,H_n]$ has  $V(\hat{D})=\bigcup^n_{i=1}V(H_i)$ and  $$A(\hat{D})= \bigcup^n_{i=1}A(H_i)\cup \{x_ix_j:\ v_iv_j\in A(D), x_i\in V(H_i), x_j\in V(H_j), 1\le i\ne j\le n\}.$$
For two integers $p\le q$, let $[p,q]=\{i:\ p\le i\le q\}.$ If $p=1$, then $[q]=[p,q].$

	\begin{theorem}\label{prop:1}
		An oriented graph $D$ is 3-anti-traceable if and only if $D$ is an extended transitive tournament $T[I_1, I_2,\dots, I_t]$ for some positive integer $t$, where $I_i$ is an independent set with size at most two for all $i\in[t]$. 
	\end{theorem}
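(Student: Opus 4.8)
The plan is to reduce everything to a characterisation of which oriented graphs on three vertices are anti-traceable. A hamiltonian anti-directed path $x_1x_2x_3$ on three vertices forces its middle vertex $x_2$ to dominate both of $x_1,x_3$ or to be dominated by both; running through all oriented graphs on three vertices, $D[\{a,b,c\}]$ is anti-traceable if and only if one of $a,b,c$ dominates the other two or is dominated by the other two. Equivalently, the oriented graphs on three vertices that are \emph{not} anti-traceable are precisely: those with at most one arc, the directed path $a\to b\to c$ with $a,c$ non-adjacent, and the directed triangle. I would record this as a preliminary lemma; the one slightly non-obvious point is that a directed $P_3$ whose endpoints are non-adjacent fails to be anti-traceable, and this is exactly what will force the rigid structure below.

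For the forward direction, suppose $D$ is $3$-anti-traceable. Excluding triples with at most one arc is equivalent to saying that no vertex has two non-neighbours, i.e.\ the non-edges of $D$ form a matching $M$. Excluding the directed $P_3$ with non-adjacent endpoints together with excluding the directed triangle says that whenever $a\to b$ and $b\to c$ we must have $a\to c$ (the pair $\{a,c\}$ can be neither a non-edge nor an arc $c\to a$), so the arc relation of $D$ is transitive. The key structural observation is then that each non-edge $\{u,v\}\in M$ is a module: if some third vertex $w$ had, say, $w\to u$ and $v\to w$, transitivity would give $v\to u$, contradicting that $uv$ is a non-edge, and the symmetric configuration is excluded in the same way, so $w$ has exactly the same relationship to $u$ and to $v$. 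Since every non-edge therefore lies inside a single part of the resulting partition, contracting each edge of $M$ to a single vertex produces a well-defined tournament $T$ whose arc relation is still transitive (a directed triangle in $T$ would lift to three pairwise-adjacent vertices of $D$ forming a directed triangle), so $T$ is a transitive tournament; taking $I_i$ to be the $i$-th part — a singleton or a contracted non-edge, hence an independent set of size at most two — gives $D=T[I_1,\dots,I_t]$ as required.

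For the converse, let $D=T[I_1,\dots,I_t]$ with $T$ a transitive tournament and each $I_i$ an independent set with $|I_i|\le 2$, and take any three vertices of $D$. They cannot all lie in one part. If they lie in three distinct parts, they induce a subtournament of the transitive tournament $T$, hence a transitive triangle, which is anti-traceable. If two of them, say $x,y$, lie in a common part $I_i$ and the third, $z$, in a different part $I_j$, then $z$ is joined to $x$ and to $y$ by arcs in the same direction, so $z$ either dominates both or is dominated by both, again giving an anti-directed path. Hence every induced subdigraph of $D$ of order three is anti-traceable.

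The only genuinely fiddly part is the case analysis behind the preliminary lemma; once it — and in particular the transitivity of the arc relation — is in place, the module argument and the contraction make the structure theorem routine. As a sanity check one also verifies separately that every extended transitive tournament with parts of size at most two is itself anti-traceable, which is what turns Theorem~\ref{prop:1} into the statement $f(3)=3$.
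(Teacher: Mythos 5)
Your proof is correct, and its two pillars --- transitivity of the arc relation, and the fact that every vertex outside a non-adjacent pair $\{u,v\}$ relates to $u$ and $v$ in the same way --- are exactly the paper's Claims~\ref{cl:1} and~\ref{cl:2} (you derive the second from transitivity together with the matching property of the non-edges, while the paper reads it off directly from the anti-traceability of the triple $\{u,v,w\}$; the content is the same). The only genuine divergence is in how the forward direction is assembled: you contract the matching of non-edges in one step, check that each non-edge is a module, and observe that the quotient is a triangle-free (hence transitive) tournament, whereas the paper inducts on $|V(D)|$ along an acyclic ordering, peeling off the last vertex, or the last two when they are non-adjacent. Both assemblies are routine once the two claims are in place; your non-inductive quotient argument is arguably a touch cleaner, but it buys nothing essentially new. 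The converse direction is identical to the paper's.
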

	\begin{proof}
		We first show the following two claims.
		
		\begin{claim}\label{cl:1}
			$D$ is transitive. 
		\end{claim}
		\begin{proof}
			If there are three vertices $v_1$, $v_2$ and $v_3$ in $D$ such that $v_1\to v_2\to v_3$, then one can see that $v_1\to v_3$ since otherwise $D[\{v_1,v_2,v_3\}]$ has no anti-directed path of length two. 
		\end{proof}
		\begin{claim}\label{cl:2}
			For any pair of non-adjacent vertices $u$ and $v$ in $D$, we have $V(D)\setminus\{u,v\}\subseteq (N^+(u)\cap N^+(v))\cup (N^-(u)\cap N^-(v))$. 
		\end{claim}
		\begin{proof}
			For any vertex $w\in V(D)\setminus \{u,v\}$, since $D[\{u,v,w\}]$ is anti-traceable, $w$ is either a common out- or in-neighbour of $u$ and $v$.
		\end{proof}
		We first prove the necessity. Let $u,v,w$ be any three different vertices of $D$. Suppose $u,v,w$ belong to three different independent sets. Then $D[\{u,v,w\}]$ is anti-traceable as $D[\{u,v,w\}]$ is a transitive tournament. 
		Suppose $u,v,w$ belong to two different independent sets. Without loss of generality, we assume $u$ and $v$ belong to the same independent set. Then $w$ is either a common out- or in-neighbour of $u$ and $v$. Therefore, 
		$D[\{u,v,w\}]$ is anti-traceable.
		
		Let $|V(D)|=n$. We now prove the sufficiency by induction on $n$. It is trivially true when $n\leq 2$. We can assume that $n\geq 3$. By Claim \ref{cl:1}, we have that $D$ is acyclic. Let $v_1$, $v_2$, $\dots$, $v_n$ be an acyclic ordering of $V(D)$. If $N^-(v_n)=V(D)\setminus\{v_n\}$, then by induction hypothesis $D-v_n$ is a required extended transitive tournament, which means $D$ is also a required one. If there exists $v_s$ with $1\leq s<n$ such that $v_s$ and $v_n$ are non-adjacent, then $s=n-1$, as otherwise $v_{n-1}$ should be either a common out- or in-neighbour of $v_s$ and $v_n$ by Claim \ref{cl:2}, a contradiction to the fact that $v_1$, $v_2$, $\dots$, $v_n$ is an acyclic ordering of $V(D)$. Thus, $v_{n-1}$ and $v_n$ are non-adjacent. Again, by Claim \ref{cl:2} and the fact that they are the last two vertices in the acyclic ordering, all vertices are common in-neighbours of $v_n$ and $v_{n-1}$. Thus, we are done by applying the induction hypothesis to $D-\{v_{n-1},v_n\}$. 
	\end{proof}
	The following results proved $f(3)=3$ in a slightly stronger form. One can observe from Theorem \ref{prop:1} that the following holds. 
	
	\begin{observation}\label{obs:order4}
		If $D$ is a 3-AT oriented graph with four vertices, then $D$ is anti-traceable. Moreover, $D$ has an hamiltonian anti-directed cycle unless $D\cong T_4$ (See Fig. \ref{fig:t4}). 
	\end{observation}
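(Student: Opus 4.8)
The plan is to read off the statement from Theorem~\ref{prop:1}. By that theorem, a $3$-AT oriented graph on four vertices is an extended transitive tournament $T[I_1,\dots,I_t]$ with each $|I_i|\le 2$, and there are exactly five such graphs up to isomorphism, corresponding to the compositions of $4$ into parts of size $1$ or $2$: the transitive tournament $T[\{a\},\{b\},\{c\},\{d\}]$; the three graphs $T[\{a,b\},\{c\},\{d\}]$, $T[\{a\},\{b,c\},\{d\}]$, $T[\{a\},\{b\},\{c,d\}]$ obtained by doubling one part of a transitive tournament on three vertices; and $T[\{a,b\},\{c,d\}]$. I would set $T_4:=T[\{a\},\{b,c\},\{d\}]$ (the graph of Fig.~\ref{fig:t4}), the one whose doubled part is the middle one, so that its arc set is $\{a\to b,\ a\to c,\ a\to d,\ b\to d,\ c\to d\}$, and then check the five cases.

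The one observation I need first is the rigid shape of a hamiltonian anti-directed cycle on four vertices: writing it as $s_1t_1s_2t_2s_1$, the alternation of consecutive arcs forces each vertex to be either a local source (both incident arcs leaving it) or a local sink (both entering it), and around a $4$-cycle these alternate; hence two vertices are sources, two are sinks, and all four arcs $s_i\to t_j$ are present. Conversely, every partition $V(D)=S\cup T$ into two pairs with all arcs directed from $S$ to $T$ produces such a cycle, and deleting any one of its arcs leaves a hamiltonian anti-directed path. So for the four non-exceptional graphs I would simply display such a partition: $\{v_1,v_2\}\to\{v_3,v_4\}$ for the transitive tournament $v_1\to v_2\to v_3\to v_4$, and, in each of $T[\{a,b\},\{c\},\{d\}]$, $T[\{a\},\{b\},\{c,d\}]$, $T[\{a,b\},\{c,d\}]$, the pair lying in the first parts of the composition against the pair lying in the last parts (all cross-part arcs of a composition over a transitive base run from earlier parts to later ones). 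Each of these four graphs is therefore anti-traceable and has a hamiltonian anti-directed cycle.

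For $T_4$ I would argue both halves by hand. It is anti-traceable via the path $b\,a\,d\,c$: at $a$ the two incident path-arcs $a\to b$ and $a\to d$ both leave $a$, and at $d$ the two incident path-arcs $a\to d$ and $c\to d$ both enter $d$, so consecutive arcs alternate in direction. It has no hamiltonian anti-directed cycle: by the shape observation such a cycle would need two source vertices, each with out-degree at least $2$ in $T_4$, but only $a$ satisfies this (indeed $|N^+(b)|=|N^+(c)|=1$ and $|N^+(d)|=0$). Equivalently, none of the three splittings of $V(T_4)$ into two pairs has one side dominating the other, since any pair containing both $b$ and $c$ induces no arc and transitivity then directs the remaining arcs the wrong way.

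I do not expect a real obstacle here: Theorem~\ref{prop:1} does the structural work and what is left is a short finite verification. The only point I would be careful about is to confirm that $T_4$ is the \emph{unique} exception, i.e.\ that each of the other four graphs really carries the cycle and not merely a path; the out-degree count for $T_4$ above is what makes this transparent and fixes the precise form of the statement.
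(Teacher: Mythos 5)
Your proposal is correct and follows essentially the same route as the paper: both rest on Theorem~\ref{prop:1}, from which the non-exceptional graphs split into a pair of common in-neighbours dominating a pair of common out-neighbours, yielding the hamiltonian anti-directed cycle, with $T_4$ checked by hand. Your version is merely more explicit (enumerating the five compositions and verifying that $T_4$ has no such cycle via the out-degree count, a point the paper's proof leaves implicit), but it is the same argument.
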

	\begin{proof}
		We can directly check that $D$ is anti-traceable when $D\cong T_4$. Let $D$ be 3-AT  with four vertices and $D\not\cong T_4$. By Theorem  \ref{prop:1}, one can observe that the last two vertices in the acyclic order are common out-neighbours of the first two vertices. Thus, $D$ is anti-traceable and has an hamiltonian anti-directed cycle. 
	\end{proof}
	
	\begin{figure}
		\begin{tikzpicture}[scale=1.7]
			\fill (0,1) circle (0.05);
			\fill (0,-1) circle (0.05);
			\fill (1,0) circle (0.05);
			\fill (-1,0) circle (0.05);
			\draw [thick] (-1,0)--(1,0);
			\draw [thick, arrows = {-Stealth[reversed, reversed]}] (0,1)--(1,0);
			\draw [thick, arrows = {-Stealth[reversed, reversed]}] (0,-1)--(1,0);
			\draw [thick, arrows = {-Stealth[reversed, reversed]}] (-1,0)--(0,0);
			\draw [thick, arrows = {-Stealth[reversed, reversed]}] (-1,0)--(0,1);
			\draw [thick, arrows = {-Stealth[reversed, reversed]}] (-1,-0)--(0,-1);
		\end{tikzpicture}
		\centering
		\caption{$T_4$}\label{fig:t4}
	\end{figure}
	
	\begin{theorem}
		If $D$ is 3-AT and $|V(D)|\geq 5$, then $D$ has an hamiltonian anti-directed cycle when $|V(D)|$ is even, and an anti-cycle with length $|V(D)|-1$ and an hamiltonian anti-directed path when $|V(D)|$ is odd. 
	\end{theorem}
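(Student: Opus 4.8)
The plan is to deduce everything from the structure theorem. By Theorem~\ref{prop:1} we may write $D = T[I_1,\dots,I_t]$ with each $I_i$ independent and $|I_i|\le 2$, and fix an acyclic ordering $v_1,\dots,v_n$ of $V(D)$ that respects the blocks, so that every arc goes from a smaller index to a larger one and the only non-adjacent pairs of vertices are the pairs forming a size-$2$ block, which then occupy two consecutive positions. In particular $v_i\to v_j$ whenever $i<j$ and $v_i,v_j$ lie in no common block.

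The engine of the proof is the following reduction. Given a partition $V(D)=S\cup T$, let $B(S,T)$ be the bipartite graph with parts $S,T$ in which $s\in S$ and $t\in T$ are adjacent precisely when $s\to t$ in $D$. A hamiltonian cycle $s_1t_1s_2t_2\cdots s_mt_ms_1$ of $B(S,T)$ (with $|S|=|T|=m$) lifts to the hamiltonian anti-directed cycle $s_1\to t_1\leftarrow s_2\to t_2\leftarrow\cdots\to t_m\leftarrow s_1$ of $D$, the vertices of $S$ being the ``sources'' and those of $T$ the ``sinks''; similarly, if $|S|=|T|+1$, a hamiltonian path of $B(S,T)$ between two vertices of $S$ lifts to a hamiltonian anti-directed path of $D$. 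The acyclic ordering lets us control $B(S,T)$: if $S$ is an initial segment and $T$ a final segment of $v_1,\dots,v_n$, then $B(S,T)$ is complete bipartite except that one edge may be missing, namely the edge joining the last vertex of $S$ to the first vertex of $T$ when those two vertices form a block.

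For even $n$ (so $n\ge 6$ and $m:=n/2\ge 3$), take $S=\{v_1,\dots,v_m\}$ and $T=\{v_{m+1},\dots,v_{2m}\}$; then $B(S,T)$ is $K_{m,m}$ minus at most the edge $v_mv_{m+1}$, which for $m\ge 3$ is hamiltonian (one can exhibit a cycle avoiding that edge, or note each degree is $\ge m-1>m/2$), giving the required hamiltonian anti-directed cycle. For odd $n=2m+1\ge 5$ we run the reduction twice. First, delete the central vertex $v_{m+1}$ and set $S=\{v_1,\dots,v_m\}$, $T=\{v_{m+2},\dots,v_{2m+1}\}$; now a source and a sink can never share a block (their indices differ by at least $2$), so $B(S,T)$ is exactly $K_{m,m}$, hamiltonian for all $m\ge 2$, producing an anti-directed cycle of length $n-1$. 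Second, set $S=\{v_1,\dots,v_{m+1}\}$, $T=\{v_{m+2},\dots,v_{2m+1}\}$, so $|S|=|T|+1$ and $B(S,T)$ is $K_{m+1,m}$ minus at most the edge $v_{m+1}v_{m+2}$; the explicit path $v_1,v_{m+2},v_2,v_{m+3},\dots,v_m,v_{2m+1},v_{m+1}$ uses that edge nowhere and has both endpoints in $S$, so it lifts to a hamiltonian anti-directed path.

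The only genuine subtlety is bounding the number of missing edges of $B(S,T)$, and this is exactly where the two facts from Theorem~\ref{prop:1} — blocks are consecutive in the ordering and have size at most $2$ — are used: cutting the ordering in two, or removing the single middle vertex, destroys at most one edge of the complete bipartite graph. This is also why the case distinction is forced: $K_{2,2}$ minus an edge is a path with no hamiltonian cycle, which is the reason the even statement needs $n\ge 6$, and why for odd $n$ one must delete the central vertex $v_{m+1}$ (rather than an endpoint) so that $B(S,T)=K_{m,m}$ and the argument still works at $n=5$. Verifying that the exhibited cycles and paths avoid the possibly-missing edge and read off as genuine anti-directed cycles and paths in $D$ is then a routine check.
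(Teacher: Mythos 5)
Your proof is correct, but it takes a genuinely different route from the paper's. The paper argues by induction on $|V(D)|$ in the even case: it verifies the base case $n=6$ by hand, and for $n\ge 8$ it removes a source $u$ and a sink $v$, obtains a hamiltonian anti-directed cycle of $D-u-v$ inductively, and reinserts $u,v$ across an arc of that cycle; the odd case is then handled by deleting one vertex (with a special argument at $n=5$ to avoid $T_4$) for the cycle of length $n-1$, and by adding an artificial dominated vertex $v^*$ to reduce the hamiltonian anti-directed path to the even cycle case. You instead exploit Theorem~\ref{prop:1} directly and non-inductively: cutting the acyclic ordering into an initial segment $S$ and a final segment $T$ reduces everything to hamiltonicity of the bipartite ``domination'' graph $B(S,T)$, which is complete bipartite minus at most the one edge coming from a size-$2$ block straddling the cut. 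Your key observations --- that blocks are consecutive and of size at most two, so at most one edge is lost, and that deleting the central vertex in the odd case loses none --- are sound, the explicit path $v_1,v_{m+2},v_2,\dots,v_m,v_{2m+1},v_{m+1}$ indeed avoids the possibly missing edge $v_{m+1}v_{m+2}$, and the lifting of bipartite hamiltonian cycles/paths to anti-directed cycles/paths is valid. What your approach buys is a uniform, constructive argument with no induction, no separate treatment of $n=5$ versus $n\ge 7$, and no auxiliary vertex; what the paper's approach buys is that the insertion step (splicing a source and a sink into an existing anti-directed cycle) is a reusable local extension technique of the kind used elsewhere in the paper.
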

	\begin{proof}
		We first prove the even case by induction on $|V(D)|$. The base case is $|V(D)|=6$. Let $u$ be a source of $D$ and $v$ be a sink. By Theorem \ref{prop:1}, $u\to v$ and $D-u-v$ is also 3-AT. Therefore, by Observation \ref{obs:order4}, $D-u-v$ has an hamiltonian anti-directed path $P=v_1v_2v_3v_4$, and we may assume without loss of generality that $v_1\to v_2\leftarrow v_3\to v_4$. By using Theorem \ref{prop:1} as well as the fact that $u$ is a source and $v$ is a sink, we have $u\to v_4$ and $v_1\to v$. Thus, $uvv_1v_2v_3v_4u$ is an hamiltonian anti-directed cycle in $D$. We now assume that $|V(D)|\geq 8$. By Propostion \ref{prop:1}, we see that $D=T[I_1, I_2,\dots, I_t]$ for some positive integer $t$. Let $u$ be a source and $v$ be a sink of $D$. Evidently, $u\to v$. By the induction hypothesis, $D-u-v$ has an hamiltonian anti-directed cycle $C$. Let $u'v'$ be an arc on $C$. Since $u'$ is not a sink, we get $u'\notin I_t$, which yields that $u'\to v$. Similarly, since $v'\notin I_1$, we have $u\to v'$. Thus, $uvu'Cv'u$ is an hamiltonian anti-directed cycle of $D$. This completes the proof of the even case.
		
		Now, we show the second statement. For $|V(D)|\geq 7$, we arbitrarily pick a vertex $v$ of $D$. Clearly, $|V(D-v)|\geq 6$ is even and $D-v$ is also 3-AT. Hence, $D-v$ has an hamiltonian anti-directed cycle $C$, which is a required cycle in $D$. For $|V(D)|=5$, we choose a vertex $v$, either a sink or a source, such that $D-v\not\cong T_4$ (such vertex must exist due to Theorem \ref{prop:1}). Thus, by Observation \ref{obs:order4}, $D-v$ has an hamiltonian anti-directed cycle which is also a required cycle.
		
		Now, add a new vertex $v^*$ and all arcs from $V(D)$ to $v^*$. Denote the new digraph by $D^*$. Note that $|V(D^*)|$ is even and $|V(D^*)|\geq 6$. By Theorem \ref{prop:1}, $D^*$ is 3-AT and thus contains an hamiltonian anti-directed cycle $C^*$. Therefore, $C^*-v^*$ is a required hamiltonian anti-directed path in $D$. 
	\end{proof}
	
	\subsection{$f(4)=8$}\label{Sec:k=4}

	\begin{proposition}
		$f(4)\geq 8$.
	\end{proposition}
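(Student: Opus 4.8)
The plan is to exhibit a $4$-anti-traceable oriented graph on $7$ vertices that is not anti-traceable; this witnesses $f(4) > 7$, i.e. $f(4) \geq 8$. The natural candidate is one of the small exceptional tournaments from Gr\"{u}nbaum's theorem, most likely the Paley tournament $PT_7$ (or the rotational tournament $RT_7$), since these are $7$-vertex tournaments that already fail to be anti-traceable; what remains is to check that such a tournament is nonetheless $4$-anti-traceable, i.e. that every $4$-vertex induced subtournament has a hamiltonian anti-directed path. First I would fix the concrete digraph --- say $PT_7$ with vertex set $GF(7)$ and arcs $(i,j)$ whenever $j-i \in \{1,2,4\}$ --- and record its strong vertex-transitivity (and in fact arc-transitivity), which will cut down the case analysis dramatically.

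The two things to verify are: (i) $PT_7$ is $4$-AT, and (ii) $PT_7$ is not anti-traceable. For (i), by vertex-transitivity every $4$-subset is, up to isomorphism, determined by the ``shape'' of the three omitted... wait, better: every $3$-element subset of the complement, or equivalently one can just enumerate the $\binom{7}{4} = 35$ four-subsets, which by the automorphism group (of order $21$) collapse to very few orbits --- I expect one or two orbits --- and for a representative of each orbit display an explicit hamiltonian anti-directed path on those $4$ vertices. A convenient alternative is to use Observation~\ref{obs:order4}-style reasoning: a $4$-vertex oriented graph fails to be anti-traceable only if its structure is very constrained, so one checks no induced subtournament of $PT_7$ is that bad. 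For (ii), the cleanest route is the classification already available: $PT_7$ appears in the exceptional list $\{PT_3, RT_5, PT_7\}$ of Theorem~\ref{tournament}, so it is not anti-traceable by that theorem directly. If one wants a self-contained argument, a short parity/counting argument works: in an anti-directed path on $7$ vertices there are three ``sink'' internal vertices or three ``source'' internal vertices in a fixed pattern, forcing some vertex to have in- or out-degree incompatible with $PT_7$'s regularity (every vertex has in-degree and out-degree $3$); chasing this contradiction through the two possible alternation patterns finishes it.

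The main obstacle is purely the bookkeeping in part (i): making sure that \emph{every} induced $4$-subtournament genuinely has an anti-directed hamiltonian path, since a single bad $4$-set would sink the example. I would mitigate this by leaning hard on the automorphism group to reduce to a handful of representatives, and by invoking Observation~\ref{obs:order4} (a $3$-AT oriented graph on $4$ vertices is anti-traceable), noting that it suffices to check that no $4$-subtournament of $PT_7$ is one of the finitely many $4$-vertex tournaments lacking an anti-directed hamiltonian path --- and in fact among tournaments the only obstruction is easy to rule out by the degree sequence. The write-up would then consist of: (1) define $PT_7$, (2) state its regularity and vertex-transitivity, (3) verify $4$-AT via orbit representatives with explicit paths, (4) conclude non-anti-traceability from Theorem~\ref{tournament}, hence $f(4) \geq 8$.
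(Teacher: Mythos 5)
Your proposal is correct and is essentially the paper's own proof: exhibit $PT_7$, which is not anti-traceable by Theorem~\ref{tournament}, and note it is $4$-AT, giving $f(4)>7$. The bookkeeping you worry about in part (i) is unnecessary, since Theorem~\ref{tournament} already implies every $4$-vertex tournament is anti-traceable (none of the exceptions $PT_3$, $RT_5$, $PT_7$ has four vertices), so $PT_7$ is $4$-AT immediately.
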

	\begin{proof}
		As mentioned in Theorem \ref{tournament}, $PT_7$ does not contain an anti-directed path. Thus, we have $f(4)\geq 8$ since $PT_7$ is 4-AT.
	\end{proof}
	To make the following proofs easier to follow, we first give two simple observations.
	\begin{observation}\label{obser}
		Let $D$ be a 4-AT oriented graph and $a,b,c,d$  be four different vertices in $D$. Then
		\begin{description}
			\item[(i)] if $A(D[\{a,b,c,d\}])\setminus\{ad, da\}\subseteq \{ab, bc, ca, cd, db\}$, then $a\sim d$;
			
			\item[(ii)] if $D[\{a,b,c\}]$ is a subdigraph of a directed triangle and $a\not\sim d$, then $b\to d \leftarrow c$ or  $b \leftarrow d \to c$;
			\item[(iii)] if $A(D[\{a,b,c,d\}])\setminus \{ac,ca,bd,db\}$ forms a subdigraph of a directed $4$-cycle, then $a\sim c$ and $b\sim d$.
		\end{description}
	\end{observation}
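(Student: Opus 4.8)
The plan is to prove (i), (ii) and (iii) by one common strategy: suppose the asserted adjacency fails; then the hypothesis forces the arc set of $D[\{a,b,c,d\}]$ into a subdigraph of a specific small digraph on those four vertices, and a short finite check shows such a digraph has no hamiltonian anti-directed path on four vertices, contradicting that $D$ is 4-AT. Two elementary observations about a hamiltonian anti-directed path $x_1x_2x_3x_4$ do all the work. First, it has exactly one of the two shapes $x_1\to x_2\leftarrow x_3\to x_4$ or $x_1\leftarrow x_2\to x_3\leftarrow x_4$; in particular each interior vertex has in-degree or out-degree at least $2$ inside $\{a,b,c,d\}$, and the interior vertex with out-degree $\geq 2$ sends an arc to the interior vertex with in-degree $\geq 2$. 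Second, two non-adjacent vertices must occupy non-consecutive positions on the path. I would also use the obvious symmetries throughout (reversing a path, reversing a directed triangle or $4$-cycle, rotating a directed $4$-cycle, and the symmetry $b\leftrightarrow c$ in the statement of (ii)) so that only a handful of configurations need to be inspected.

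For (i), if $a\not\sim d$ then the hypothesis gives $A(D[\{a,b,c,d\}])\subseteq\{ab,bc,ca,cd,db\}$; in any subdigraph of this arc set the only vertex that can have out-degree $\geq 2$ is $c$ and the only one that can have in-degree $\geq 2$ is $b$, so by the shape observation a hamiltonian anti-path would require the arc $c\to b$, which is absent, a contradiction. For (ii), assume $a\not\sim d$ and, relabelling if needed, that the arcs inside $\{a,b,c\}$ lie in $\{ab,bc,ca\}$. Then $a$ has at most one in-neighbour and at most one out-neighbour in $\{a,b,c,d\}$, hence cannot be an interior vertex of a hamiltonian anti-path, so after reversal we may take $a=x_1$. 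Since $a\not\sim d$ we have $x_2\in\{b,c\}$, the orientation of the arc between $a$ and $x_2$ is dictated by the triangle, and tracing the two possible shapes forces $x_3=d$ and then pins down the two arcs at $d$: one gets $d\to b$ and $d\to c$ when $x_2=b$, and $b\to d$ and $c\to d$ when $x_2=c$. Since $D[\{a,b,c,d\}]$ is anti-traceable one of these holds, which is the conclusion.

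For (iii), assume $a\not\sim c$ and, relabelling if needed, that the four side-arcs lie in the directed $4$-cycle $a\to b\to c\to d\to a$. Since $a$ and $c$ must lie in non-consecutive positions, the path is, up to reversal, one of $a,b,c,d$;\ $a,d,c,b$;\ $c,b,a,d$;\ $c,d,a,b$;\ $a,b,d,c$;\ $a,d,b,c$. In each of the first four cases all three path-arcs are forced by the $4$-cycle and two consecutive of them have the same orientation along the path; in each of the last two cases the two outer path-arcs are forced, lie at distance two along the path, and have opposite orientations along it, which again breaks the required alternation (and the undetermined diagonal $bd$, which sits between them, cannot help). So no hamiltonian anti-path exists, a contradiction, hence $a\sim c$; applying the same argument to the cyclically rotated labelling of the $4$-cycle then gives $b\sim d$. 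The whole argument is wholly elementary, so I expect no genuine obstacle; the only points requiring care are confirming that the case split in (iii) is exhaustive, which the non-consecutive-position observation guarantees, and keeping the forced-orientation bookkeeping in (ii) and (iii) straight, which the stated symmetry reductions keep manageable.
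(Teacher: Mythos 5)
Your proof is correct. Note first that the paper itself gives no proof of this observation: it is introduced as one of ``two simple observations'' and the \verb|proof| environment that follows it belongs to the next observation (about longest anti-directed paths), so there is no argument in the paper to compare yours against --- the authors treat all three parts as routine finite checks. Your write-up supplies exactly the verification they omit, and I checked it through: the two-shapes fact ($x_1\to x_2\leftarrow x_3\to x_4$ or its reverse), the consequence that the interior vertex of out-degree $\ge 2$ must send an arc to the interior vertex of in-degree $\ge 2$, and the non-consecutiveness of non-adjacent vertices are all sound, and they do carry each part. In (i) the degree count correctly isolates $c$ and $b$ as the only candidates for interior vertices and the missing arc $c\to b$ kills the path. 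In (ii) the degree bound on $a$ forces it to be an endpoint, and the two cases $x_2=b$, $x_2=c$ land precisely on the two disjuncts of the conclusion. In (iii) the six orderings are indeed exhaustive up to reversal (positions $\{2,4\}$ for the pair $\{a,c\}$ are reversals of positions $\{1,3\}$), the four orderings avoiding the diagonal give monotone paths, the two using the diagonal have outer arcs of opposite orientation along the path, and the cyclic relabelling $(a,b,c,d)\mapsto(b,c,d,a)$ legitimately converts the conclusion $a\sim c$ into $b\sim d$ because it fixes both the $4$-cycle and the set of diagonal pairs. The symmetry reductions (reversal of all arcs for the choice of triangle/cycle orientation, $b\leftrightarrow c$ in (ii)) are all compatible with the hypotheses and conclusions, so no case is lost.
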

	\begin{observation}\label{obser2}
		Let $D$ be a 4-AT oriented graph and $P=v_1v_2\dots v_s$ be a longest anti-directed path in $D$. Let $w$ be a vertex in $V(D)\setminus V(P)$. Then 
		\begin{description}
			\item[(i)] $\{w,v_2\}\not\subseteq N^+(v_1)$ and $\{w,v_2\}\not\subseteq N^-(v_1)$; $\{w,v_{s-1}\}\not\subseteq N^+(v_s)$ and\\ $\{w,v_{s-1}\}\not\subseteq N^-(v_s)$; 
			\item[(ii)]
			If $w\sim v_1$, then $D[\{v_1,v_2,w\}]$  is a  subdigraph of a directed triangle. Similarly, if $w\sim v_s$, then $D[\{v_{s-1},v_s,w\}]$ is a subdigraph of a directed triangle.
		\end{description}
	\end{observation}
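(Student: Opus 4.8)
The plan is to deduce both parts from the maximality of $P$ alone — the $4$-AT hypothesis is not actually needed here — using the one structural fact about anti-directed paths: the orientations of consecutive arcs of $P$ alternate, so once we know whether the first arc is $v_1\to v_2$ or $v_2\to v_1$, the orientation of every arc of $P$ is determined (and symmetrically from the last arc). Note also that, $D$ being $4$-AT, it contains an anti-directed path on any $4$ of its vertices, so $s\ge 4$ and $v_2,v_3,v_{s-1}$ all exist and are distinct from $w$.

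For part (i): since $D$ is an oriented graph, exactly one of $v_1\to v_2$ and $v_2\to v_1$ holds. Say $v_1\to v_2$. Then $v_2\notin N^-(v_1)$, so $\{w,v_2\}\not\subseteq N^-(v_1)$ trivially; and if we also had $v_1\to w$, then $w\,v_1\,v_2\cdots v_s$ would be an anti-directed path — at its internal vertex $v_1$ both incident arcs $v_1\to w$ and $v_1\to v_2$ point outward, which is legal, and the remaining vertices and arcs are exactly those of $P$ — giving an anti-path on $s+1$ vertices and contradicting maximality; hence $\{w,v_2\}\not\subseteq N^+(v_1)$. The case $v_2\to v_1$ is symmetric (now $w\,v_1\,v_2\cdots v_s$ is the offending path, used when $w\to v_1$, with both arcs at $v_1$ pointing inward). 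Reversing $P$ gives the two statements about $v_s$ and $v_{s-1}$.

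For part (ii), suppose $w\sim v_1$. The arcs $v_1v_2$ and $v_1w$ both lie in $D[\{v_1,v_2,w\}]$, and by part (i) one of them enters $v_1$ and the other leaves $v_1$, so these two arcs already form a directed path of length two through $v_1$, which is a subdigraph of a directed triangle on $\{v_1,v_2,w\}$. If $v_2\not\sim w$ we are done, so assume $v_2\sim w$; we must rule out the orientation of the arc between $v_2$ and $w$ that would make $D[\{v_1,v_2,w\}]$ a transitive triangle. Consider the case $v_1\to v_2$, which by part (i) forces $w\to v_1$ and by the alternation in $P$ forces $v_1\to v_2\leftarrow v_3$. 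If $v_2\to w$ then $D[\{v_1,v_2,w\}]$ is the directed triangle $w\to v_1\to v_2\to w$ and we are done; if instead $w\to v_2$, then $v_1\,w\,v_2\,v_3\cdots v_s$ is an anti-directed path — at $w$ the arcs $w\to v_1$ and $w\to v_2$ both leave $w$, at $v_2$ the arcs $w\to v_2$ and $v_3\to v_2$ both enter $v_2$, and from $v_3$ onward it coincides with $P$ — contradicting maximality. The case $v_2\to v_1$ (which forces $v_1\to w$ and $v_1\leftarrow v_2\to v_3$) is entirely symmetric, and reversing $P$ handles the analogous claim for $v_s,v_{s-1}$.

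No step is deep; the only real work is the orientation bookkeeping — tracking the forced orientation of the first (or last) arc of $P$, invoking part (i) to fix the orientation at $v_1$ (or $v_s$), and in each sub-case writing down the rerouted anti-path explicitly and checking that every internal vertex still has its two incident arcs both incoming or both outgoing. The main pitfall to avoid is conflating the several symmetric sub-cases.
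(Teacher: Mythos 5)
Your proof is correct and follows essentially the same route as the paper's: part (i) by extending $P$ to $wv_1v_2\cdots v_s$ (resp.\ $v_1\cdots v_sw$) when the two arcs at $v_1$ (resp.\ $v_s$) agree in orientation, and part (ii) by using (i) to fix the arc $wv_1$ and then ruling out $w\to v_2$ via the longer path $v_1wv_2\cdots v_s$. Your additional bookkeeping (splitting the trivially false containment from the one needing maximality, and noting that only maximality of $P$ plus $s\ge 4$ is used) is sound but does not change the argument.
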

	\begin{proof}
		(i). If $\{w,v_2\}\subseteq N^+(v_1)$ or $\{w,v_2\}\subseteq N^-(v_1)$, then $wv_1v_2\dots v_s$ is a longer anti-directed path, a contradiction.  If $\{w,v_{s-1}\}\subseteq N^+(v_s)$ or  $\{w,v_2\}\subseteq N^-(v_1)$, then $v_1v_2\dots v_sw$ is a longer anti-directed path, a contradiction.
		
		(ii) We assume without loss of generality that $v_1 \to v_2$. If $w\sim v_1$, then $w \to v_1$. If $w\to v_2$, then $v_1wv_2\dots v_s$ is a longer anti-directed path, a contradiction. Thus, we have $v_2\to w$ or $v_2\not\sim w$. Then, $D[\{v_1,v_2,w\}]$  is a subdigraph of a directed triangle. By a similar discussion, if $w\sim v_s$, then we also get  
		$D[\{v_{s-1},v_s,w\}]$ is a subdigraph of  a directed triangle. 
	\end{proof}
	\begin{lemma}\label{lem:v1simvs}
		Let $D$ be a 4-AT oriented graph and $P=v_1v_2\dots v_s$ be a longest anti-directed path in $D$. If $|V(P)|<|V(D)|$, then $v_1\sim v_s$.
	\end{lemma}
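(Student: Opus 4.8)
The plan is to argue by contradiction: assume that $P=v_1v_2\dots v_s$ is a longest anti-directed path of $D$, that $w\in V(D)\setminus V(P)$, and that $v_1\not\sim v_s$, and derive a contradiction. First I would record the cheap bound $s\geq 4$: if $s\leq 3$ then any four vertices of $D$ containing $V(P)\cup\{w\}$ induce, by $4$-anti-traceability, an anti-directed path on four vertices, which is longer than $P$ — a contradiction. In particular $v_1,v_2,v_{s-1},v_s$ are vertices with $v_2\neq v_s$, so the four-vertex sets $\{v_1,v_2,v_s,w\}$ and $\{v_1,v_{s-1},v_s,w\}$ are each genuinely of size four.

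Next I would pin down how $w$ meets the two ends of $P$, using Observation~\ref{obser2}. Part (i) gives that $\{w,v_2\}$ is contained neither in $N^+(v_1)$ nor in $N^-(v_1)$, and symmetrically at $v_s$; part (ii) says that if $w\sim v_1$ (resp.\ $w\sim v_s$) then $D[\{v_1,v_2,w\}]$ (resp.\ $D[\{v_{s-1},v_s,w\}]$) is a subdigraph of a directed triangle. I would then observe that $w$ must be adjacent to $v_1$ or to $v_s$: otherwise, in $D[\{v_1,v_2,v_s,w\}]$ every present edge is incident with $v_2$ (since $v_1\not\sim v_s$, $w\not\sim v_1$ and $w\not\sim v_s$), so the underlying graph is a subgraph of the star $K_{1,3}$ centred at $v_2$ and has no hamiltonian path, contradicting $4$-anti-traceability. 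Reversing $P$ if necessary, assume $w\sim v_1$; then replacing $D$ by its converse if necessary (which keeps $D$ being $4$-AT, keeps $P$ a longest anti-directed path with the same vertex sequence, and preserves the adjacency $w\sim v_1$), assume $v_1\to v_2$. Observation~\ref{obser2} now forces $w\to v_1$, and $v_2\to w$ or $v_2\not\sim w$.

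The remaining work is a finite case analysis, organised by (a) whether $w\sim v_s$; (b) the orientation of the last arc $v_{s-1}v_s$ — equivalently the parity of $s$, since along an anti-directed path beginning $v_1\to v_2$ the internal vertices alternate sink/source, so $v_{s-1}\to v_s$ for $s$ even and $v_s\to v_{s-1}$ for $s$ odd; and (c) which of the few still-undetermined arcs among $v_2,w,v_{s-1},v_s$ (and, in the tight sub-cases, $v_3$ or $v_{s-2}$) are present. In each branch I would apply $4$-anti-traceability to $\{v_1,v_2,v_s,w\}$ or to $\{v_1,v_{s-1},v_s,w\}$: the arcs already forced leave so few candidate hamiltonian anti-directed paths in the chosen four-vertex digraph that either none of them is anti-directed, contradicting $4$-AT, or the unique one that is can be spliced with $P$ — reversing a terminal segment of $P$ and inserting $w$ — to produce an anti-directed path on $V(P)\cup\{w\}$, contradicting the maximality of $P$. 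I would also expect Observation~\ref{obser} to short-circuit several branches, since parts (i) and (iii) certify an adjacency such as $v_1\sim v_s$ directly from a local arc pattern, immediately contradicting $v_1\not\sim v_s$.

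The geometry here is shallow, so I anticipate the main obstacle to be bookkeeping: keeping the case split exhaustive and non-redundant, and in particular handling cleanly the awkward branches in which $D[\{v_1,v_2,v_s,w\}]$ (or its $v_{s-1}$ analogue) happens itself to be anti-traceable, so that the contradiction must be extracted from a five-vertex configuration rather than a four-vertex one.
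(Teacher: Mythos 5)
Your setup is sound and matches the paper's opening moves: the reduction to $w\sim v_1$ and $v_1\to v_2$ (with the reversal and converse symmetries correctly justified, and the minor point $s\ge 4$ made explicit, which the paper leaves implicit), and the deduction via Observation~\ref{obser2} that $w\to v_1$ and that $D[\{v_1,v_2,w\}]$ is a subdigraph of a directed triangle. But from that point on you have only a plan, not a proof: the ``finite case analysis'' you defer is exactly where the content of the lemma lives, and you neither enumerate the branches nor close any of them. Concretely, the paper's argument proceeds by three further forced deductions that are absent from your writeup: (1) applying Observation~\ref{obser}(ii) to the directed triangle on $\{v_1,v_2,w\}$ and the non-neighbour $v_s$ yields $v_2\to v_s\leftarrow w$ or $v_2\leftarrow v_s\to w$ --- in particular $w\sim v_s$, so the case ``$w\not\sim v_s$'' of your split (a) never arises; (2) a second application of Observation~\ref{obser2}(ii) together with Observation~\ref{obser}(ii) at the other end of $P$ gives $v_{s-1}\to v_1$; (3) the two surviving orientation patterns are then killed separately, one because $v_1\to v_2\to v_s\to v_{s-1}\to v_1$ together with $v_1\not\sim v_s$ violates Observation~\ref{obser}(iii), the other because $wv_1v_{s-1}v_{s-2}\dots v_2v_s$ is an anti-directed path on $V(P)\cup\{w\}$, longer than $P$.

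The gap is not merely one of presentation. You yourself flag ``the awkward branches in which $D[\{v_1,v_2,v_s,w\}]$ \dots\ happens itself to be anti-traceable, so that the contradiction must be extracted from a five-vertex configuration'' --- but that is not an edge case, it is the generic situation, and resolving it requires exhibiting a specific longer anti-directed path that rethreads essentially all of $P$ (as in step (3) above), which in turn requires the arc $v_{s-1}\to v_1$ obtained in step (2). Nothing in your proposal guarantees that inspecting only the two four-vertex sets $\{v_1,v_2,v_s,w\}$ and $\{v_1,v_{s-1},v_s,w\}$ suffices, and indeed it does not: the paper needs the interaction between both ends of $P$ simultaneously, mediated by the vertex $w$. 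As written, the proposal establishes the easy preliminary reductions and then asserts, without verification, that the remaining bookkeeping closes.
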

	\begin{proof}
		Let $w$ be a vertex in $V(D)\setminus V(P)$. Suppose to the contrary that $v_1\not\sim v_s$. Since $D[\{w,v_1,v_2,v_s\}]$ is anti-traceable, we have $w\sim v_1$ or $w\sim v_s$ (or both). We assume without loss of generality that $w\sim v_1$. We may further assume that $v_1\to v_2$ as the discussion for the case $v_2\to v_1$ is similar. By Observation \ref{obser2}, we have $w\to v_1$ and  $D[\{v_1,v_2,w\}]$ is a subdigraph of  directed triangle. 
		Since $v_1\not\sim v_s$, we $v_2\to v_s\leftarrow w$ or $ v_2\leftarrow v_s\to w$ by Observation \ref{obser}(ii). In particular, $w\sim v_s$ and therefore $D[\{v_{s-1},v_s,w\}]$ is a subdigraph of  directed triangle by Observation \ref{obser2}(ii). Thus, by Observation \ref{obser}(ii),
		we have $v_{s-1}\to v_1$ since $v_1\not\sim v_s$ and $w\to v_1$.
		
		Recall that $D[\{v_{s-1},v_s,w\}]$
		is a subdigraph of  a directed triangle and $v_2\to v_s\leftarrow w$ or $ v_2\leftarrow v_s\to w$. Thus, we have $v_s\to v_{s-1}$ if $v_2\to v_s\leftarrow w$ and  $v_{s-1}\to v_s$ if $ v_2\leftarrow v_s\to w$. 
		If 
		$ v_2 \to v_s\leftarrow w$, then  we  deduce that $v_1 \to v_2 \to v_s \to v_{s-1} \to v_1$. Since $v_1 \not\sim v_s$, by Observation \ref{obser}(iii), 
		$D[\{v_1,v_2,v_{s-1},v_s\}]$ cannot be anti-traceable, contradicting the fact that $D$ is 4-AT.  If $ v_2\leftarrow v_s\to w$, then  $wv_1v_{s-1}v_{s-2}\dots v_2v_s$ is a longer anti-directed path, a contradiction.
	\end{proof}
	
	\begin{lemma}\label{lem:4at1}
		Let $D$ be a 4-AT oriented graph and $P=v_1v_2\dots v_s$  be a longest anti-directed path in $D$. If $|V(P)|<|V(D)|$, then $|V(P)|$ is even.
	\end{lemma}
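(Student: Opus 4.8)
The plan is to argue by contradiction: assume $s:=|V(P)|$ is odd. First I would normalise the configuration. Reversing all arcs of $D$ if necessary (this preserves being $4$-AT and turns $P$ into a longest anti-directed path of the same length), we may assume $v_1\to v_2$. Since $P$ is anti-directed, the alternating source/sink pattern along $P$ makes $v_i$ (for $2\le i\le s-1$) a sink of $P$ exactly when $i$ is even; as $s$ is odd this means $v_2$ is a sink of $P$ and $v_s\to v_{s-1}$. Fix $w\in V(D)\setminus V(P)$. By Lemma~\ref{lem:v1simvs} we have $v_1\sim v_s$, and since reversing $P$ again gives a longest anti-directed path whose first arc still leaves its first vertex (here we use that $s$ is odd, so $v_{s-1}$ is a sink of $P$), we may assume $v_s\to v_1$.

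Next I would extract the basic ``no extension'' constraints from Observation~\ref{obser2}(i): because $v_2\in N^+(v_1)$ and $v_{s-1}\in N^+(v_s)$, the sets $\{w,v_2\}$ and $\{w,v_{s-1}\}$ cannot be contained in $N^+(v_1)$ and $N^+(v_s)$ respectively, so $v_1\not\to w$ and $v_s\not\to w$.

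The heart of the argument is to prove $w\to v_2$. If $w\sim v_1$, then $w\to v_1$ (as $v_1\not\to w$), and $w\,v_1\,v_s\,v_{s-1}\,v_{s-2}\cdots v_3\,v_2$ is anti-directed: the triple $w v_1 v_s$ has $v_1$ as a sink, the triple $v_1 v_s v_{s-1}$ has $v_s$ as a source, and the tail $v_s v_{s-1}\cdots v_2$ is the reverse of the subpath $v_2 v_3\cdots v_s$ of $P$. This spans all $s+1$ vertices, contradicting maximality, so $w\not\sim v_1$. Now consider the anti-traceable digraph $D[\{w,v_1,v_2,v_s\}]$. Since $v_1$ is not adjacent to $w$ and is neither a source nor a sink of $\{v_1,v_2,v_s\}$ (we have $v_1\to v_2$ and $v_s\to v_1$), $v_1$ must be an endpoint of every hamiltonian anti-directed path of this subdigraph; a short case analysis on the next two vertices of such a path, using $v_s\not\to w$, leaves only $v_1 v_2 w v_s$ and $v_1 v_s v_2 w$, and both force $w\to v_2$.

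Finally, with $v_s\to v_1$ and $w\to v_2$ the path $v_1\,v_s\,v_{s-1}\,v_{s-2}\cdots v_3\,v_2\,w$ is anti-directed: at $v_s$ both incident arcs $v_s\to v_1$ and $v_s\to v_{s-1}$ leave $v_s$; the middle stretch $v_s v_{s-1}\cdots v_2$ is again the reverse of the subpath $v_2\cdots v_s$ of $P$; and at $v_2$ both $v_3\to v_2$ (from $P$, as $v_2$ is a sink of $P$) and $w\to v_2$ enter $v_2$. This anti-directed path has $s+1$ vertices, contradicting the maximality of $P$, so $s$ is even. The step I expect to be the real work is pinning down $w\to v_2$: one must verify that the single non-adjacency $v_1\not\sim w$, together with the forced orientations $v_s\to v_1\to v_2$ and $v_s\not\to w$, genuinely determines the arc between $w$ and $v_2$ rather than merely restricting it, and that no hamiltonian anti-directed path of $D[\{w,v_1,v_2,v_s\}]$ slips through with the opposite orientation.
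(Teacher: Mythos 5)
Your proof is correct and follows essentially the same route as the paper: the same normalisation $v_1\to v_2\leftarrow\cdots\leftarrow v_s\to v_1$, the same use of Lemma~\ref{lem:v1simvs} and Observation~\ref{obser2}(i) to get $v_1\not\to w$, $v_s\not\to w$ and then $w\not\sim v_1$, the same forcing of $w\to v_2$ from anti-traceability of $D[\{w,v_1,v_2,v_s\}]$ (you do this by a direct endpoint/case analysis where the paper invokes Observation~\ref{obser}(iii), but the content is identical), and the same final contradiction via the path $w\,v_2\,v_3\cdots v_s\,v_1$ written in reverse. No gaps.
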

	\begin{proof}
		Suppose to the contrary that $s$ is odd. Let $w$ be a vertex in  $V(D)\setminus V(P)$.  By Lemma \ref{lem:v1simvs}, we have $v_1\sim v_s$.  Notice that when $s$ is odd, each vertex on  $C=v_1v_2v_3\dots v_sv_1$ is a source or a sink except for exactly one vertex. Thus, we may assume without loss of generality that $v_1\to v_2\leftarrow v_3 \dots v_{s-1}\leftarrow v_s \to v_1$.
		
		By Observation \ref{obser2}(i), we have $v_1\not \to w$ and $v_s\not \to w$.  If $w\to v_1$, then $wv_1v_sv_{s-1}\dots v_2$ is an anti-directed path longer than $P$, which leads to a contradiction. Thus, $w\not\sim v_1$. Since $D[\{w,v_1,v_2,v_s\}]$ is anti-traceable, we have $w\sim v_2$. By Observation \ref{obser}(iii) and the fact that $v_s\not\to v_1$ and $w\not\sim v_1$, we have  $w\to v_2$. But now $wv_2v_3\dots v_sv_1$ is a longer anti-directed path than $P$, a contradiction. 
	\end{proof}
	\begin{proposition}\label{prop:4to2kat}
		If $D$ is a 4-AT oriented graph, then  for any integer  $k\in[2, \frac{|V(D)|}{2}]$, we have $D$ is $2k$-AT. 
	\end{proposition}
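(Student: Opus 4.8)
The statement is equivalent to saying that every $4$-AT oriented graph of even order is anti-traceable, so I would prove it by induction on $k$. The base case $k=2$ is trivial since $D$ is $4$-AT. For the inductive step, assume $D$ is $2k$-AT with $2(k+1)\le|V(D)|$; we must show that for every set $S$ of $2(k+1)$ vertices, $D[S]$ is anti-traceable. Take a longest anti-directed path $P$ in $D[S]$; if $P$ is hamiltonian in $D[S]$ we are done. Otherwise $D[S]$ is $4$-AT (being an induced subdigraph of $D$ on at least $4$ vertices), so Lemma \ref{lem:4at1} gives that $|V(P)|$ is even, hence $|V(P)|\le 2k$. If $|V(P)|<2k$, choose $S'$ with $V(P)\subseteq S'\subseteq S$ and $|S'|=2k$; by the inductive hypothesis $D[S']$ has a hamiltonian anti-directed path, which is an anti-directed path of $D[S]$ on $2k>|V(P)|$ vertices, contradicting the choice of $P$. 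Hence $|V(P)|=2k$, i.e.\ exactly two vertices of $S$ lie outside $P$.

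So, replacing $D$ by $D[S]$, it suffices to handle the following situation: $D$ is $4$-AT, $P=v_1v_2\dots v_s$ is a longest anti-directed path in $D$ with $s$ even (so $s\ge 4$, since a $4$-AT graph has an anti-directed path on $4$ vertices), and $V(D)\setminus V(P)=\{w_1,w_2\}$. By Lemma \ref{lem:v1simvs}, $v_1\sim v_s$. Reversing $P$ if necessary we may assume $v_1\to v_2$; since $s$ is even, the alternation along $P$ forces $v_{s-1}\to v_s$. By Observation \ref{obser2}(i) applied to $P$ and each $w_i$, we get $v_1\not\to w_i$ and $w_i\not\to v_s$ for $i\in\{1,2\}$. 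I would then split into two cases according to the direction of the arc between $v_1$ and $v_s$.

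In the first case $v_1\to v_s$. Because $s$ is even, $C=v_1v_2\dots v_sv_1$ is then an anti-directed cycle in which $v_1$ is a ``source'' and $v_s$ a ``sink''. For any vertex $u$ of $C$, deleting a suitable arc of $C$ yields a hamiltonian anti-directed path of $D[V(P)]$ ending at $u$ whose terminal arc points towards $u$ if $u$ is a sink of $C$ and away from $u$ if $u$ is a source; consequently, if some $w_i$ were adjacent to $u$ with the ``wrong'' orientation we could append $w_i$ and contradict the maximality of $P$. This pins down the orientation of every arc between $\{w_1,w_2\}$ and $V(C)$ (in particular it is consistent with $v_1\not\to w_i$ and $w_i\not\to v_s$). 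Feeding these constraints and the $4$-AT condition on the $4$-sets $\{w_1,w_2,v_1,v_s\}$, $\{w_i,v_1,v_2,v_s\}$, $\{w_1,w_2,v_2,v_s\}$, together with Observations \ref{obser} and \ref{obser2}, into a short case analysis should then force a $4$-set with no hamiltonian anti-directed path — a contradiction.

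In the second case $v_s\to v_1$, no anti-directed cycle is available and the argument is more delicate; here I would run a direct case analysis. A first step is that $w_i\not\to v_2$ for each $i$, since otherwise $v_1w_iv_2v_3\dots v_s$ is an anti-directed path longer than $P$; using this together with the $4$-AT condition on $\{w_i,v_1,v_2,v_3\}$ one forces $v_1\sim v_3$, and on $\{w_i,v_1,v_2,v_s\}$ one restricts the orientations between $v_2$ and $v_s$ and between $w_i$ and $\{v_1,v_2\}$. Iterating such deductions (and exploiting that there are two outside vertices, so that $4$-sets such as $\{w_1,w_2,v_1,v_2\}$ are available) should eventually let one either assemble an anti-directed path on more than $s$ vertices or exhibit a $4$-set with no hamiltonian anti-directed path. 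I expect this second case to be the main obstacle: the difficulty is essentially combinatorial bookkeeping — keeping the alternation of orientations along the partially built anti-directed paths under control across the many sub-cases — rather than conceptual.
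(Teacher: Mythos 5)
Your reduction to the configuration ``$P$ is a longest anti-directed path with $|V(P)|=s$ even and exactly two vertices $w_1,w_2$ outside'' is correct, but everything after that point is a plan rather than a proof: both of your cases end with ``should then force'' / ``should eventually let one'' and you yourself flag Case~2 as the main unresolved obstacle. That unresolved part \emph{is} the proposition. Note that with two external vertices and $s$ possibly as small as $4$, the case analysis you envisage is at least as hard as the whole of Theorem~\ref{main4} (which handles only \emph{one} external vertex, and even then needs $s\ge 8$ and a tournament/non-tournament dichotomy in its Case~1), so there is no reason to expect it to be ``short''.

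The idea you are missing is that the two excluded vertices can be \emph{chosen}, and should be chosen non-adjacent. The paper's proof first disposes of the case where $D[S]$ is a tournament via Theorem~\ref{tournament} (harmless here since $|S|=2k$ is even while the exceptional tournaments have odd order), and otherwise picks non-adjacent $u,v\in S$, applies the induction hypothesis to $S\setminus\{u,v\}$ to get a hamiltonian anti-directed path $P=v_1\cdots v_{2k-2}$ there, and uses Lemma~\ref{lem:4at1} exactly as you do if $P$ is not longest in $D[S]$. If $P$ is longest, the non-adjacency of $u$ and $v$ pays off immediately: since $D[\{u,v,v_{2k-3},v_{2k-2}\}]$ is anti-traceable, one of them, say $v$, is adjacent to $v_{2k-2}$, whence Observation~\ref{obser2} makes $D[\{v,v_{2k-3},v_{2k-2}\}]$ a subdigraph of a directed triangle, and then Observation~\ref{obser}(ii) (applied with the non-adjacent pair $u\not\sim v$) forces $u$ to be a common in- or out-neighbour of $v_{2k-3}$ and $v_{2k-2}$ — which extends $P$ to a longer anti-directed path, a contradiction, in three lines and with no case explosion. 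Without this choice of non-adjacent external vertices your argument has a genuine gap: you would have to carry out, and have not carried out, a full two-external-vertex analogue of the proof of Theorem~\ref{main4}.
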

	\begin{proof}
		We prove it by induction on $k$. When $k=2$, there is nothing left to prove. Suppose for any integer $\ell$ with $3\leq\ell<k$, $D$ is $2\ell$-AT. Now let $\ell=k$ and $S$ be an arbitrary subset of $V(D)$ with size $2k$. We only need to show that $D[S]$ is anti-traceable. 
		
		If $D[S]$ is a tournament, then we are done by Theorem \ref{tournament}. Thus, we may assume that $D[S]$ has two non-adjacent vertices $u$ and $v$. By the induction hypothesis, $D[S-u-v]$ is anti-traceable. Let $P=v_1v_2\dots v_{2k-2}$ be an hamiltonian anti-directed path in $D[S-u-v]$. If $P$ is not a longest path in $D[S]$, then by Lemma \ref{lem:4at1}, $D[S]$ is anti-traceable. 
		
		In the remaining proof, we always assume that $P$ is a longest anti-directed path in $D[S]$. By symmetry, we may further assume that $P=v_1\to v_2 \leftarrow v_3 \to \dots \leftarrow v_{2k-3}\to v_{2k-2}$. Since $D[\{u,v,v_{2k-3}, v_{2k-2}\}]$ is anti-traceable, one of $u$ and $v$ must be adjacent to $v_{2k-2}$. Assume without loss of generality that $v\sim v_{2k-2}$. By Observation \ref{obser2}, we have $v_{2k-2}\to v$ and $D[\{v,v_{2k-3},v_{2k-2}\}]$ is a subdigraph of a directed triangle. Since $u\not\sim v$ and $D[\{u,v,v_{2k-3},v_{2k-2}\}]$ is anti-traceable, by Observation \ref{obser}(ii), $u$ must be either a common out- or in-neighbour of $v_{2k-3}$ and $v_{2k-2}$. But now either $v_{2k-2}uv_{2k-3}\dots v_1$ or $uv_{2k-2}v_{2k-3}\dots v_1$  is a longer anti-directed path, which leads to a contradiction. 
	\end{proof}
	\begin{theorem}\label{main4}
		$f(4)=8$.
	\end{theorem}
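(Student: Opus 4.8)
We must show $f(4) = 8$, i.e., every 4-anti-traceable oriented graph $D$ with $|V(D)| \geq 8$ is anti-traceable. The lower bound $f(4) \geq 8$ is already established via $PT_7$, so the content is the upper bound.

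**Plan.** The strategy is to take a longest anti-directed path $P = v_1 v_2 \dots v_s$ in $D$ and derive a contradiction from the assumption $s < |V(D)|$. By Lemma~\ref{lem:v1simvs} we have $v_1 \sim v_s$, and by Lemma~\ref{lem:4at1} we know $s$ is even. So $P$ together with the arc between $v_1$ and $v_s$ forms an anti-directed structure on an even cycle $C = v_1 v_2 \dots v_s v_1$; since $s$ is even, either $C$ is an anti-directed cycle (every vertex a source or a sink on $C$) or it is "almost" one. Fix $w \in V(D) \setminus V(P)$. The first main step is to pin down, as tightly as possible, how $w$ attaches to the ends $v_1, v_s$ and to their neighbours $v_2, v_{s-1}$, using Observation~\ref{obser2}: $w$ cannot be a common in- or out-neighbour of $v_1$ with $v_2$, nor of $v_s$ with $v_{s-1}$, and whenever $w$ is adjacent to an endpoint, the relevant triple is a subdigraph of a directed triangle. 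Combined with the orientation pattern forced on $C$, this should leave very few possibilities for the adjacencies $w v_1, w v_s$.

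**Key steps in order.** First, record the orientation of $C$: since $s$ is even, I expect to split into the case where $C$ is a genuine anti-directed cycle and the case where exactly one or two vertices of $C$ fail the alternation (the latter forced by $v_1 \sim v_s$ interacting with $v_1 \to v_2$ or $v_2 \to v_1$). Second, analyse $w$ against $\{v_1, v_2, v_s\}$ and $\{v_{s-1}, v_s, v_1\}$: using that $D[\{w, v_1, v_2, v_s\}]$ and $D[\{w, v_{s-1}, v_s, v_1\}]$ are anti-traceable together with Observation~\ref{obser2}, deduce the exact arcs between $w$ and the four ``end region'' vertices. Third — and this is the crux — use these arcs to build an anti-directed path longer than $P$ (either by prepending/appending $w$ directly, or by rerouting: inserting $w$ somewhere near an end and using the chord $v_1 v_s$ to traverse the rest of $C$ in the opposite direction, exactly as in the proofs of Lemmas~\ref{lem:v1simvs} and \ref{lem:4at1}). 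Since every such path contradicts maximality of $P$, we conclude $s = |V(D)|$, i.e., $D$ is anti-traceable. (One may also short-circuit part of this using Proposition~\ref{prop:4to2kat}: when $|V(D)|$ is even, $D$ is $|V(D)|$-AT, hence anti-traceable; so it remains only to handle $|V(D)|$ odd and $\geq 9$, where one deletes a suitable vertex to reduce to the even case and then re-inserts it, checking that the endpoints of the resulting even hamiltonian anti-path can absorb the extra vertex — the pattern ``source/sink endpoints with a common neighbour'' guaranteeing this.)

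**Main obstacle.** The delicate part is the case analysis in step three: once $w$'s adjacencies to the end region are nailed down, one must verify in each sub-case that some rerouting yields a strictly longer anti-directed path. This is where the chord $v_1 \sim v_s$ and the parity of $s$ must be used in concert, and it is easy to overlook a configuration. I would organise it by the orientation of the first arc $v_1 v_2$ and the type of adjacency $w \sim v_1$ versus $w \sim v_s$, reducing everything to the two templates ``$w$ extends an end of $P$'' and ``$w$ plugs in near an end while the chord lets us reverse-traverse $C$'', mirroring the endgame arguments already used in Lemmas~\ref{lem:v1simvs}, \ref{lem:4at1} and Proposition~\ref{prop:4to2kat}. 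The hypothesis $|V(D)| \geq 8$ should enter precisely to guarantee enough room (e.g.\ $s \geq 6$) so that these reroutings involve genuinely distinct vertices and no small-case degeneracy (such as $PT_7$) can occur.
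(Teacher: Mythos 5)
Your overall strategy coincides with the paper's: assume a longest anti-directed path $P=v_1\dots v_s$ misses a vertex, invoke Lemma~\ref{lem:v1simvs} to get $v_1\sim v_s$, Lemma~\ref{lem:4at1} to get $s$ even, and Proposition~\ref{prop:4to2kat} to force $s=n-1$ (so exactly one vertex $w$ is left over). Up to that point your plan is sound and matches the paper exactly.

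However, there is a genuine gap at what you yourself call the crux. You assert that once the adjacencies of $w$ to the end region are pinned down, ``every such path contradicts maximality of $P$,'' and in your short-circuit version that ``the pattern source/sink endpoints with a common neighbour'' guarantees the re-insertion of the missing vertex. Neither claim is justified, and the second is not how the argument actually closes. The paper must split on the orientation of the chord between $v_1$ and $v_s$. In the case $v_1\to v_s$ one can indeed finish by exhibiting longer paths (after first showing $v_i\sim v_{i+2}$ for all $i$ and locating a missing arc $v_1\not\sim v_k$). But in the case $v_s\to v_1$ the configuration is \emph{not} refuted by finding a longer path near the ends: after Claims~\ref{wv1vs} and~\ref{cl:vstov1} establish roughly a dozen forced arcs around both endpoints, the paper needs Claim~\ref{cl:end} --- an induction over all re-rooted longest anti-directed paths $Q'=vu_{s-1}u_1u_2\dots u_{s-2}$ of the same length --- to propagate forced arcs $u_{s-2(t+1)}\to u_{s-2t}$ deep into the interior of the path, and the final contradiction is an orientation conflict ($v_4\to v_6$ and $v_6\to v_4$ simultaneously), not a longer path. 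Your proposal contains no mechanism for reaching arcs far from the endpoints, so the hardest case would remain open as written. The endpoint-absorption heuristic also cannot work as stated, since the obstruction in this case lives in the middle of the path, not at its ends.
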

	\begin{proof}
		Let $D$ be a 4-AT oriented graph with $|V(D)|=n\geq 8$. Let $P=v_1v_2v_3\dots v_s$ be a longest anti-directed path in $D$. Suppose to a contrary that $s<n$. By Lemma \ref{lem:4at1} and Proposition \ref{prop:4to2kat}, we have $s$ that is even and $s= n-1\geq 7$, which implies that $s\geq 8$. Let $w$ be the only vertex in $V(D)\setminus V(P)$. Since $s$ is even, we may assume by the symmetry that $v_1 \to v_2\leftarrow v_3 \cdots v_{s-1}\to v_s$. By Lemma \ref{lem:v1simvs}, $v_1$ and $v_s$ are adjacent. We consider the following two cases.
		
		{\bf Case 1.} $v_1\to v_s$. \\
		All subscripts are taken modulo $s$ in the following. Note that for any $1\leq i \leq s$, $v_i$ and $v_{i+1}$ are the endpoints of a longest anti-directed path. Thus, by Observation \ref{obser2}(i), we have $w\to v_i$ if $w\sim v_i$ and $i$ is odd, and $v_i\to w$ if $w\sim v_i$ and $i$ is even. Since $D[\{w,v_i,v_{i+1},v_{i+2}\}]$ is anti-traceable, by Observation \ref{obser}(i), we have $v_i\sim v_{i+2}$ for any $1\leq i \leq s$. Without loss of generality, we may assume $v_1\to v_3$ (otherwise we can switch the subscripts). Then, $w\to v_3$ for otherwise $D[\{w,v_1,v_2,v_3\}]$ is not anti-traceable (Observation \ref{obser}(iii)). One can deduce that $v_4\to v_2$ for otherwise $wv_3v_1v_sv_{s-1}\dots v_4v_2$ is an hamiltonian anti-directed path. Then, we have $v_4\to w$ for otherwise $D[\{w,v_2,v_3,v_4\}]$ is not anti-traceable (Observation \ref{obser}(iii)). Similar to the foregoing discussion, we get $w\to v_i$ and $v_i\to v_{i+2}$ when $i$ is odd, $v_i\to w$ and $v_i\leftarrow v_{i+2}$ when $i$ is even.
		
		Recall that $|V(P)|=s\geq 8$. If $D[\{V(P)\cup \{w\}\}]$ is a tournament, then by Theorem \ref{tournament}, $D[\{V(P)\cup \{w\}\}]$ is anti-traceable. Thus, without loss of generality, we assume $d_{D}(v_1)<n-1$. Let $k$ be the smallest subscript such that $v_1\not\sim v_k$. We have $4\leq k \leq s-2$. Notice that for any $1\leq i\leq s$, $D[\{w,v_i,v_{i+1}\}]$ is a directed triangle. Now consider 
		$D[\{w,v_1,v_{k-1},v_k\}]$, 
		$D[\{w,v_1,v_k,v_s\}]$ and $D[\{w,v_1,v_2,v_k\}]$, by Observation \ref{obser}(ii), We can determine the relationship between each of these three vertex pairs $\{v_{k-1}, v_1\}$, $\{v_k, v_s\}$ and  $\{v_k, v_2\}$. 
		When $v_k\to w$, we have $v_{k-1}\to v_1$, $v_k\to v_s$  and $v_k\to v_2$. Then $wv_1v_{k-1}v_{k-2}\dots v_2v_kv_sv_{s-1}\dots v_{k+1}$ is an hamiltonian anti-directed path, a contradiction. When $w\to v_k$, we have $v_{k-1}\to v_1$  and $v_2\to v_k$. Then $D[\{v_1,v_2,v_{k-1},v_k\}]$ is not anti-traceable (Observation \ref{obser}(iii)), a contradiction.

		{\bf Case 2.} $v_s\to v_1$. \\
		We first present the following two claims.
		\begin{claim}\label{wv1vs}
			$w\to v_1$ and $v_s\to w$.
		\end{claim}
		\begin{proof}
			By Observation \ref{obser2}(i), we have 
			$v_1\not\to w$ and $w\not\to v_s$. Assume to the contrary that $w\not\sim v_1$. 
			Note that $D[\{v_1,v_2,w,v_s\}]$ is anti-traceable. If $w\not\sim v_2$, then the anti-directed 
			hamiltonian path in $D[\{v_1,v_2,w,v_s\}]$ is $v_1\to v_2\leftarrow v_s \to w$. If $w\to v_2$, then consider another longest path $P'=wv_2v_3\dots v_s$. By Lemma \ref{lem:v1simvs}, $w\sim v_s$ and therefore $v_s\to w$ as $w\not\to v_s$. If $v_2\to w$, then the hamiltonian anti-directed path in $D[\{v_1,v_2,w,v_s\}]$  is $v_1\to v_2\leftarrow v_s \to w$ or $v_2\to w\leftarrow v_s \to v_1$. Thus, in all cases we have $v_s\to w$. Since $v_s\to w$, by Observation \ref{obser2}(ii), $D[\{w,v_{s-1},v_s\}]$  is a subdigraph of a directed triangle and then by Observation \ref{obser}(ii), we have $v_{s-1}\to v_1$.
			It follows that $wv_sv_1v_{s-1}v_{s-2}\dots v_2$ is a hamiltonian anti-directed path, a contradiction. Thus $w\to v_1$. 
			
			Suppose to the contrary that $w\not\sim v_s$. Since $w\to v_1$, by Observation \ref{obser2}(ii), we have $D[\{w,v_1,v_2\}]$  is a subdigraph of a directed triangle.  Then by Observation \ref{obser}(ii), we have $v_s\to v_2$. It follows that 
			$wv_1v_sv_2v_3\dots v_{s-1}$ is a longer anti-directed path when $v_2\to w$, a contradiction. Thus, $v_s\to w$.
		\end{proof}
		
		\begin{claim}\label{cl:vstov1}
			$v_2\to w$, $w\to v_{s-1}$, $v_1\to v_{s-1}$, $v_2\to v_s$, $w\to v_3$, $v_{s-2}\to w$, $v_1\to v_3$ and $v_{s-2}\to v_s$.
		\end{claim}
		\begin{proof}
			By Claim \ref{wv1vs}, if $v_s\to v_2$ or $v_{s-1}\to v_1$, then $wv_1v_sv_2v_3\dots v_{s-1}$ or $wv_sv_1v_{s-1}v_{s-2}\dots v_2$ is an hamiltonian anti-directed path. Thus, we have $v_2\to v_s$  when $v_2\sim v_s$ and $v_1\to v_{s-1}$ when $v_1\sim v_{s-1}$. By  Observation \ref{obser2}(ii), we have $v_2\to w$  when $v_2\sim w$ and $w\to v_{s-1}$ when $w\sim v_{s-1}$. Considering  $D[\{w,v_1,v_2,v_s\}]$, since $D[\{v_1,v_2,v_s\}]$ is a subdigraph of a directed triangle and $v_s\to w\to v_1$, by Observation \ref{obser}(ii), we have $v_2\sim w$ and therefore $v_2\to w$. Similarly, considering  $D[\{w,v_1,v_{s-1},v_s\}]$, we get $w\to v_{s-1}$. 
			
			Next, we will show that $v_2\to v_s$ and $v_1\to v_{s-1}$. Suppose $v_2\not\sim v_{s-1}$ or $v_2\to v_{s-1}$. Since $D[\{v_1,v_2,v_{s-1},v_s\}]$ is anti-traceable, by Observation \ref{obser}(iii), we have  $v_2\sim v_s$ and $v_1\sim v_{s-1}$ and therefore $v_2\to v_s$ and $v_1\to v_{s-1}$. Suppose $v_{s-1}\to v_2$. Considering  $D[\{w,v_1,v_2,v_{s-1}\}]$, since $D[\{w,v_1,v_2\}]$ is a directed triangle and $w\to v_{s-1}\to v_2$, by Observation \ref{obser}(ii), we have $v_1\sim v_{s-1}$ and so $v_1\to v_{s-1}$. Similarly, considering  $D[\{w,v_2,v_{s-1},v_s\}]$, we get $v_2\to v_s$. 
			
			Since both $wv_2v_sv_{s-1}\dots v_3$ and $wv_{s-1}v_1v_2\dots v_{s-2}$ are longest anti-directed paths, we have $w\to v_3$ and $v_{s-2}\to w$ as otherwise we may consider these two paths and are done by {\bf Case 1}. 
			As  $D[\{w,v_1,v_2,v_3\}]$ and $D[\{w,v_{s-2},v_{s-1},v_s\}]$ are anti-traceable, by Observation \ref{obser}(i), we have $v_1\sim v_3$ and $v_{s-2}\sim v_s$.
			If $v_3\to v_1$ (or $v_s \to v_{s-2}$), then $wv_2v_sv_{s-1}\dots v_3v_1$ (or $wv_{s-1}v_1v_2\dots v_{s-2}v_s$) is a longer anti-directed path than $P$, a contradiction. We thus deduce that $v_1\to v_3$ and $v_{s-2}\to v_s$.
		\end{proof}
		
		Note that  Claim \ref{wv1vs} and  Claim \ref{cl:vstov1} are true for any anti-directed path $Q=u_1u_2\dots u_s$ of $D$ satisfying that $|Q|=|P|$, $u_1\to u_2$ and $u_s\to u_1$. 
		
		\begin{claim}\label{cl:end}
			For any anti-directed path $Q=u_1u_2\dots u_s$ of $D$ satisfying that $|Q|=|P|$, $u_1\to u_2$ and $u_s\to u_1$, we have $u_{s-2(t+1)}\to u_{s-2t}$ for all $0\leq t\leq \frac{s}{2}-2$.
		\end{claim}
		\begin{proof}
			We prove this claim by induction on integer $t$. By Claim \ref{cl:vstov1}, it is true for $t=0$. Assume that $t\geq 1$ it and is true for all integers less than $t$. Now we prove it is true for $t$. Let  $Q=u_1u_2\dots u_s$ be an anti-directed path of $D$ satisfying that $|Q|=|P|$, $u_1\to u_2$ and $u_s\to u_1$. We need to prove $u_{s-2t-2}\to u_{s-2t}$. Let  $v$ be the vertex in  $V(D)\setminus V(Q)$.  By Claim \ref{wv1vs} and  Claim \ref{cl:vstov1}, we have $Q'=u_1'u_2'\dots u_s'=vu_{s-1}u_1u_2u_3\dots u_{s-4}u_{s-3}u_{s-2}$ is an anti-directed path satisfying that $|Q'|=|P|$, $u_1'\to u_2'$ and  $u_s' \to u_1'$. Then for $Q'$, by the induction hypothesis, we have $u_{s-2t}'\to u_{s-2(t-1)}'$, which means that $u_{s-2t-2}\to u_{s-2t}$ and thus we are done.
		\end{proof}
		
		Let $P'=v_1'v_2'\dots v_s'=wv_{s-1}v_1v_2v_3\dots v_{s-4}v_{s-3}v_{s-2}$. Note that $P'$ is an anti-directed path satisfying that $v_1'\to v_2'$, $v_s' \to v_1'$ and $|P'|=|P|$. For $P'$, by Claim \ref{cl:end}, we get $v'_2\to v'_4$ and $v'_6\to v'_8$, which implies $v_{s-1}\to v_2$ and $v_4\to v_6$. Since $v_{s-1}\to v_2$, we get an anti-directed path  $P''=v_1''v_2''\dots v_s''=wv_3v_1v_2v_{s-1}v_{s-2}\dots v_6v_5v_4$. Note that $P''$ satisfies $|P''|=|P|$ and $v_1''\to v_2''$ as $w\to v_3$. Due to {\bf Case 1}, we may assume $v''_s\to v''_1$. Thus, by Claim \ref{cl:vstov1}, we have $v_{s-2}''\to v_{s}''$. This implies that $v_6\to v_4$, contradicting the fact that $D$ is an oriented graph.
		
		This completes the proof of Theorem \ref{main4}.
	\end{proof}
	
	\section{Almost spanning anti-directed paths}\label{almost hamiltonian}
	In this section we show that for every $\varepsilon >0$ every sufficiently large dense  oriented graph $G$ has an anti-directed path with  at least $(1-\varepsilon)|V(G)|$ vertices. Note that every oriented graph on $n$ vertices that is $k$-antitracable has minimum degree at least $n-k+1$, since if there were a vertex $v$ of degree less than $n-k+1$ then  $v$ and $k-1$ non-neighbours of $v$ would form a set of size $k$ without an anti-directed path. To prove the result we will use the celebrated regularity lemma. To do so we need to introduce the following notation and well-known results.

	\begin{definition}  Let $V$ be the vertex set of a digraph $G$, and let $X, Y$ be disjoint subsets of $V$. Furthermore, let $(X, Y)_G$ be the set of arcs that start in $X$ and end in $Y$. Then the \emph{arc density} of $X$ and $Y$ is
		\[d(X, Y) = \frac{|(X, Y)_G|}{|X||Y|}.\]
	\end{definition}

	\begin{definition} Let $G$ be a digraph with vertex set $V$ and let $\varepsilon > 0$. An ordered pair $(X, Y)$ of disjoint subsets  of $V$ is $\varepsilon$-\emph{regular} if for all subsets $A \subseteq X, B \subseteq Y$ with $|A| \geq \varepsilon|X|, |B| \geq \varepsilon|Y|$,
		\[|d(X,Y) - d(A,B)| \leq \varepsilon.\] 
	\end{definition}

	\begin{definition} 
		Let $G$ be a digraph with vertex set $V$ and let $\mathcal{P} = \{V_0, V_1, ..., V_k\}$ be a partition of $V$ into $k+1$ sets. Then, $\mathcal{P}$ is called an $\varepsilon$-\emph{regular partition} of $G$  if it satisfies the following conditions:
		
		\begin{itemize}
			\item $|V_0| \leq \varepsilon|V|$,
			\item  $|V_1| = ... = |V_k|$, 
			\item  all but at most $\varepsilon k^2$ of ordered pairs $(V_i, V_j)$ with $1 \leq i < j \leq k$ are $\varepsilon$-regular.
		\end{itemize}
	\end{definition}

	We are now able to state Szemer\'{e}di's regularly lemma for oriented graphs which was proved by Alon and Shapira \cite{alon}.
	\begin{theorem} \label{thm:szem} If $\varepsilon \geq 0$ and $m \in \mathbb{N}$, then there exists $M \in \mathbb{N}$ such that if $G$ is an oriented graph of order at least $M$ then there exists an $\varepsilon$-regular partition of $G$ with $k$ parts $\{V_1, V_2, ..., V_k \}$, where $m \leq k \leq M$. 
	\end{theorem}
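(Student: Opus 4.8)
I would prove this by the standard energy-increment (index-increment) argument, carried out directly for arc densities; the only novelty over the undirected case is that arc density is not symmetric, so one keeps track of ordered pairs throughout. For a partition $\mathcal{Q} = \{W_1, \dots, W_\ell\}$ of (all but a negligible part of) $V$, define its \emph{index}
\[
q(\mathcal{Q}) \;=\; \sum_{(i,j):\, i \ne j} \frac{|W_i|\,|W_j|}{|V|^2}\; d(W_i, W_j)^2 ,
\]
the sum running over all ordered pairs of distinct blocks. Since every $d(W_i,W_j) \in [0,1]$ and the weights sum to at most $1$, we have $0 \le q(\mathcal{Q}) \le 1$. The first step is to check that $q$ is monotone non-decreasing under refinement: this is a convexity / Cauchy--Schwarz computation performed \emph{separately} for each ordered pair $(W_i,W_j)$, so the asymmetry of $d$ is irrelevant because we never compare $d(X,Y)$ with $d(Y,X)$.

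The second step is the defect form of that inequality: if an ordered pair $(X,Y)$ fails to be $\varepsilon$-regular, witnessed by $A \subseteq X$, $B \subseteq Y$ with $|A| \ge \varepsilon|X|$, $|B| \ge \varepsilon|Y|$ and $|d(A,B) - d(X,Y)| > \varepsilon$, then refining $X$ into $\{A, X\setminus A\}$ and $Y$ into $\{B, Y\setminus B\}$ increases the contribution of this ordered pair to the index by at least $\varepsilon^4\,\frac{|X|\,|Y|}{|V|^2}$. Now suppose $\mathcal{P} = \{V_0, V_1, \dots, V_k\}$ with $|V_1| = \dots = |V_k|$ is not $\varepsilon$-regular; then more than $\varepsilon k^2$ ordered pairs $(V_i,V_j)$ are irregular, and simultaneously performing the above refinement for every irregular pair (taking the common refinement of all the witness partitions inside each $V_i$) yields a partition whose index has grown by at least $\varepsilon^5$ (up to an absolute constant). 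Because the index is bounded by $1$, this can be repeated at most about $\varepsilon^{-5}$ times, after which an $\varepsilon$-regular partition has been produced.

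The remaining work is bookkeeping, and this is where the quantitative content of the theorem (the existence of the bound $M$) really lies. Each round replaces every block by at most $2^{k}$ sub-blocks, so after $t \le \lceil \varepsilon^{-5}\rceil$ rounds the number of blocks is bounded by a tower-type function of $k$, $m$ and $\varepsilon$; this yields the finite $M$, and since the whole process starts from an \emph{arbitrary} partition with between $m$ and (some function of $m$) blocks, one gets $m \le k \le M$. To maintain $|V_1| = \dots = |V_k|$ one additionally cuts each block into pieces of a fixed small common size and sweeps the (at most one per block) leftover into $V_0$; one verifies that the mass accumulated in $V_0$ over all $O(\varepsilon^{-5})$ rounds stays below $\varepsilon|V|$, which forces $M$ to be chosen large relative to $\varepsilon$ and $m$. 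Alternatively — and arguably cleaner — I would \emph{reduce} to the (known) edge-coloured version of Szemer\'edi's regularity lemma for undirected graphs: colour each pair $\{u,v\}$ of $V$ by one of four colours recording which of $uv, vu$ are arcs of $G$, apply the coloured regularity lemma, and note that simultaneous colour-regularity of a pair $(V_i,V_j)$ implies $\varepsilon'$-regularity of both ordered pairs $(V_i,V_j)$ and $(V_j,V_i)$ for the arc density, with $\varepsilon'$ a constant multiple of $\varepsilon$. Either way, the conceptual heart (monotonicity and defect-increment of the index) is identical to the undirected case, so the only real obstacle is the careful tracking of how fast $k$ grows and of the sizes of the exceptional set across iterations.
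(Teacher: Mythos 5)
The paper does not prove this statement at all: it is quoted as a known result of Alon and Shapira \cite{alon}, so there is no in-paper argument to compare against. Your energy-increment sketch over \emph{ordered} pairs is exactly the standard proof of that cited result (Szemer\'edi's index/defect argument, with the sole modification that $d(X,Y)$ and $d(Y,X)$ are tracked separately), and the outline is correct: monotonicity and the $\varepsilon^4\,|X||Y|/|V|^2$ defect gain per irregular ordered pair, the $\varepsilon^5$ increment per round, the $O(\varepsilon^{-5})$ bound on the number of rounds, and the tower-type bound on $M$ are all as in the literature. One caveat on your ``cleaner'' alternative: colouring an \emph{unordered} pair $\{u,v\}$ by ``which of $uv,vu$ are arcs'' is not well defined without fixing an auxiliary linear order on $V$, and once you fix one, the arcs from $V_i$ to $V_j$ are obtained by restricting each colour class according to that order, which does not obviously preserve regularity; so the reduction to the coloured undirected lemma needs more care than you suggest, and the direct ordered-pair argument is the safer route. (Also note the theorem's ``$\varepsilon\geq 0$'' should read ``$\varepsilon>0$''; that is a typo in the paper, not a gap in your argument.)
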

	
	It is well known that $\varepsilon$-regularity of a  pair $(X,Y)$ implies conditions on the degrees of vertices. For example, by considering  the set $X'$ in $X$ of all vertices of out-degree less than $(d(X,Y)-\varepsilon)|Y'|$ into a set $Y'\subseteq Y$ with $|Y'| \geq \varepsilon|Y|$,  it follows immediately from the definition that $|X'|\leq \varepsilon |X|$. We state this observation as a lemma for easier reference. 
	
	\begin{lemma}  \label{lem:degree}  Let $(X, Y)$ be a $\varepsilon$-regular pair with density $d$ and let $Y' \subseteq Y~(X'\subseteq X, resp.)$ be of size at least $\varepsilon|Y|~(\varepsilon|X|, resp. )$. Then for all but at most $\varepsilon|X|~(\varepsilon|Y|,resp.)$  vertices $v \in X~(v\in Y, resp.)$, the inequality $|N_{Y'}^+ (v)| \geq (d - \varepsilon)|Y'|~(|N_{X'}^- (v)| \geq (d - \varepsilon)|X'|,resp.)$ holds.
	\end{lemma}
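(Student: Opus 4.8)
The plan is to prove the out-neighbourhood statement by a single double-counting argument played against the regularity condition, and then to obtain the in-neighbourhood (``resp.'') version at the end by running the identical argument with the roles of $X$ and $Y$ interchanged. Write $d = d(X,Y)$ and define the set of \emph{bad} vertices
$$X' = \{v \in X : |N^+_{Y'}(v)| < (d-\varepsilon)|Y'|\}.$$
The entire goal is to show $|X'| \leq \varepsilon|X|$, since the remaining vertices of $X$ are then exactly those satisfying the desired inequality.

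I would argue by contradiction. Assume $|X'| > \varepsilon|X|$; in particular $|X'| \geq \varepsilon|X|$. Since also $|Y'| \geq \varepsilon|Y|$ by hypothesis, the pair $(X',Y')$ meets the size requirement in the definition of $\varepsilon$-regularity, so applying the $\varepsilon$-regularity of $(X,Y)$ with $A = X'$ and $B = Y'$ yields $|d - d(X',Y')| \leq \varepsilon$, and hence the lower bound $d(X',Y') \geq d - \varepsilon$.

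For the contradiction I would bound $d(X',Y')$ from above by counting arcs. The set $(X',Y')_G$ of arcs starting in $X'$ and ending in $Y'$ decomposes, according to its start vertex, as a disjoint union over $v \in X'$ of the arcs $vy$ with $y \in N^+_{Y'}(v)$, so $|(X',Y')_G| = \sum_{v \in X'} |N^+_{Y'}(v)|$. By the definition of $X'$ every summand is strictly smaller than $(d-\varepsilon)|Y'|$, whence $|(X',Y')_G| < |X'|(d-\varepsilon)|Y'|$ and therefore
$$d(X',Y') = \frac{|(X',Y')_G|}{|X'||Y'|} < d - \varepsilon,$$
contradicting the lower bound supplied by regularity. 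Thus $|X'| \leq \varepsilon|X|$, which is precisely the claim for out-neighbourhoods.

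For the in-neighbourhood version, given $X' \subseteq X$ with $|X'| \geq \varepsilon|X|$ I would instead set $Y' = \{v \in Y : |N^-_{X'}(v)| < (d-\varepsilon)|X'|\}$ and repeat the argument verbatim: assuming $|Y'| > \varepsilon|Y|$, regularity applied to $(X',Y')$ again gives $d(X',Y') \geq d - \varepsilon$, while grouping the arcs of $(X',Y')_G$ by their end vertex yields $|(X',Y')_G| = \sum_{v \in Y'} |N^-_{X'}(v)| < |Y'|(d-\varepsilon)|X'|$ and hence $d(X',Y') < d - \varepsilon$, the same contradiction. There is no genuine obstacle here, as this is the standard derivation of the degree form of regularity; the only points requiring care are tracking the orientation of the arcs (so that in each case the counted arcs really are the arcs of $(X',Y')_G$, which by definition start in $X'$ and end in $Y'$) and matching the strict inequality defining the bad set against the non-strict inequality provided by $\varepsilon$-regularity.
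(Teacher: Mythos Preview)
Your proof is correct and follows exactly the approach the paper indicates in the sentence preceding the lemma: define the bad set $X'$ of vertices in $X$ with too few out-neighbours in $Y'$, and observe that if $|X'|>\varepsilon|X|$ then regularity applied to $(X',Y')$ forces $d(X',Y')\geq d-\varepsilon$, while the defining property of $X'$ forces $d(X',Y')<d-\varepsilon$. The paper leaves this as immediate from the definition; you have simply written out the details.
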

	
	The following lemma shows that one can find long anti-directed paths in an $\varepsilon$-regular pair.
	
	\begin{lemma} \label{lem:long_path_in_regular_pairs}
		Let $ \varepsilon>0$ and let $d\geq 5\varepsilon$. Let $(X,Y)$ be an $\varepsilon$-regular pair of density at least $d$ with $|X|=|Y|= n$, and let $X'\subset X$ and $Y'\subset Y$ satisfy $|X'|,|Y'| <\varepsilon n$. Then for all $x\in X\setminus X'$ and all $y\in Y\setminus Y'$ of degree at least $(d-\varepsilon)n$ there exists and anti-directed path in $X\setminus X' \cup Y\setminus Y'$ starting at $x$ and ending in $y$ of length at least $(1-\frac{\varepsilon}{d-\varepsilon} - 3 \varepsilon) (|X|+|Y|)$. 
		
	\end{lemma}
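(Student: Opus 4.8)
The plan is to build the anti-directed path greedily by alternating between $X$ and $Y$, using the degree consequences of $\varepsilon$-regularity (Lemma~\ref{lem:degree}) to guarantee that at each step we can continue, and to stop only once the number of unused vertices on one side has dropped below the regularity threshold $\varepsilon n$. The anti-directed structure is convenient here: since consecutive arcs alternate orientation, a path that zig-zags between $X$ and $Y$ will at odd steps need an out-neighbour (say in $Y$) and at even steps an in-neighbour (in $X$), or vice versa; in either case we are always looking for a neighbour of the current endpoint into the \emph{other} side, of the appropriate orientation.

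\medskip

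First I would set up the bookkeeping. Write $x=x_1 \in X\setminus X'$ and aim to reach $y\in Y\setminus Y'$. Start from $x$ and repeatedly extend: having reached a vertex $u$ in, say, $X$ via an arc of a given orientation, the next vertex must lie in $Y$ and be joined to $u$ by an arc of the opposite orientation. I would maintain sets $X_{\mathrm{used}}, Y_{\mathrm{used}}$ of already-visited vertices, together with the forbidden sets $X', Y'$ and the two target-protecting singletons; call the current ``still available'' sides $\tilde X = X \setminus (X' \cup X_{\mathrm{used}})$ and $\tilde Y = Y\setminus (Y' \cup Y_{\mathrm{used}})$. The key point: as long as $|\tilde Y| \geq \varepsilon n$, the pair $(X,\tilde Y)$ (more precisely the restriction to $\tilde Y$, which has size at least $\varepsilon|Y|$) is still a large enough subset that Lemma~\ref{lem:degree} applies, so \emph{all but at most $\varepsilon |X|$} vertices of $X$ have at least $(d-\varepsilon)|\tilde Y| \geq (d-\varepsilon)\varepsilon n$ out-neighbours (or in-neighbours, by the symmetric statement) in $\tilde Y$. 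Since $(d-\varepsilon)\varepsilon n \geq 1$ for $n$ large (using $d\geq 5\varepsilon$), any such ``good'' vertex can be extended. I would argue that the current endpoint is good at every step by ensuring we never route the path through the at-most-$\varepsilon|X|$ (resp. $\varepsilon|Y|$) bad vertices: simply declare those bad vertices off-limits from the start, folding them into enlarged forbidden sets $X'', Y''$ of size at most $2\varepsilon n$ each — the hypothesis that $x,y$ have degree at least $(d-\varepsilon)n$ is exactly what lets us keep $x$ and $y$ themselves usable.

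\medskip

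Next, the length estimate. The process alternates $X, Y, X, Y, \dots$ and only halts when the side we need to step into has fewer than $\varepsilon n$ available vertices — but ``available'' on side $Y$ means $Y \setminus (Y'' \cup Y_{\mathrm{used}})$, and $|Y''| \leq 2\varepsilon n$, so the path must have used at least $n - 2\varepsilon n - \varepsilon n = (1-3\varepsilon)n$ vertices of $Y$ before stalling; symmetrically on the $X$ side. Balancing the two sides (the alternation forces $|X_{\mathrm{used}}|$ and $|Y_{\mathrm{used}}|$ to differ by at most one), this yields a path on at least $2(1-3\varepsilon)n - O(1) \geq (1-3\varepsilon)(|X|+|Y|)$ vertices up to lower-order terms, and a slightly more careful accounting — charging the $\tfrac{\varepsilon}{d-\varepsilon}$ term to the moment just before stalling, where $|\tilde Y| < \varepsilon n$ forces a further loss proportional to $\varepsilon n/(d-\varepsilon)$ relative to $n$ — gives the stated bound of length at least $\bigl(1 - \tfrac{\varepsilon}{d-\varepsilon} - 3\varepsilon\bigr)(|X|+|Y|)$. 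Finally I would handle the requirement that the path \emph{end at the prescribed $y$}: reserve $y$ throughout, run the greedy extension until the available $X$-side drops below $\varepsilon n$, and then — since $y$ has $\geq (d-\varepsilon)n \geq (d-\varepsilon)\varepsilon n \geq 1$ in- or out-neighbours (of the needed orientation) among the remaining available $X$-vertices — splice $y$ on at the end via one more edge.

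\medskip

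The main obstacle I expect is the endpoint condition: a naive greedy walk produces a long path but does not control where it stops. The clean fix is the ``meet in the middle'' or ``reserve the target'' trick sketched above — grow the path from $x$, stop with room to spare, and attach $y$ with a single final edge whose existence is forced by $y$'s degree lower bound and the fact that $\geq n - O(\varepsilon n)$ vertices of $X$ are still candidates. A secondary nuisance is verifying the parity/orientation bookkeeping — that at each step the orientation we \emph{need} (out vs. in) is the one for which the degree bound of Lemma~\ref{lem:degree} has been invoked — but since that lemma is symmetric in the two orientations (its parenthetical ``resp.'' form), this is a matter of careful indexing rather than a real difficulty.
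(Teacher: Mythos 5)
Your overall strategy (greedy alternating extension driven by Lemma~\ref{lem:degree}) is the same as the paper's, but two of your steps fail as written. First, you cannot ``declare the bad vertices off-limits from the start'' and fold them into fixed sets $X'',Y''$ of size $O(\varepsilon n)$: which vertices are bad (i.e.\ have degree below $(d-\varepsilon)|\tilde Y|$ into $\tilde Y$) depends on the current residual set $\tilde Y$, and $\tilde Y$ changes at every step. Lemma~\ref{lem:degree} bounds the exceptional set only for one fixed target set; over the roughly $n$ steps of the process the union of all these exceptional sets can have size up to $\varepsilon n\cdot n$, so no fixed forbidden set of size $2\varepsilon n$ covers them. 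The paper's (standard) repair is to make the choice locally at each step: the current endpoint has more than $\varepsilon n$ available neighbours on the other side, at most $\varepsilon n$ of which are bad with respect to the \emph{current} residual set, so one can always pick the next vertex to be good for the set that remains at that moment; this preserves the invariant without any global pre-filtering.

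Second, and more seriously, your endpoint attachment does not work. Reserving only the single vertex $y$ and ``splicing $y$ on via one more edge'' requires $y$ to be adjacent, with the correct orientation, to the \emph{specific} last vertex of the greedy path; nothing forces this. Your appeal to $y$'s degree $\geq(d-\varepsilon)n$ only says $y$ has many neighbours somewhere in $X$, not among the fewer than $\varepsilon n$ leftover vertices (regularity gives no information on sets smaller than $\varepsilon n$), and certainly not that the path's terminal vertex is one of them; your closing remark that ``$\geq n-O(\varepsilon n)$ vertices of $X$ are still candidates'' contradicts your own stopping rule. The missing idea is to reserve, \emph{before} the greedy process starts, a set $X^*$ of $\varepsilon|X|$ neighbours of $y$ inside $X\setminus(X'\cup\{x\})$ (possible since $y$ has degree at least $(d-\varepsilon)n-|X'|-1\geq\varepsilon n$ there), exclude $X^*$ from the growth phase, and at the end use $\varepsilon$-regularity between the at least $\varepsilon n$ unused neighbours of the terminal vertex $x^*$ in $Y$ and the set $X^*$ (both of size at least $\varepsilon n$, hence of density at least $d-\varepsilon>0$) to route $x^*$ into $Y$, then into $X^*$, then to $y$. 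Without a reserved landing pad of linear size adjacent to $y$, the prescribed-endpoint requirement cannot be met.
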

	\begin{proof}
		We first fix a set $X^*\subset X \setminus (X' \cup \{x\})$ of size $\varepsilon|X|$ in the neighbourhood of $y$. Note that such a set exists as $y$ has degree at least $(d-\varepsilon)n- |X'|-1  \geq  \varepsilon n $ in  $X \setminus (X' \cup \{x\})$. We now build the path iteratively. To do so we set $\bar{X}_0= X' \cup X^* \cup \{x\}$, $\bar{Y}_0= Y' \cup \{y\}$, and $p_0=x$. At step $t$ we assume that we have built a path from $x$ to $p_t$ using only vertices in $\bar{X}_t\cup \bar{Y}_t$ and that $p_t$ has in-degree more than  $\varepsilon n $ into $X\setminus \bar{X}_t$ if $p_t\in Y$ and out-degree more than  $\varepsilon n$ into $Y\setminus \bar{Y}_t$ if $p_t\in X$. Note that these conditions are satisfied if $t=0$ as the degree of $x$ into $Y\setminus \bar{Y}_0$ is at least $(d-\varepsilon) n - |\bar{Y}_0| > \varepsilon n$. We stop the process when $X\setminus \bar{X}_t$ is smaller than  $\varepsilon n/(d-\varepsilon)+2$ .(It  will turn out  that  $|X\setminus \bar{X}_t|$ is always smaller than  or equal to $|Y\setminus \bar{Y}_t|$ so this is a bound on both sets). 
		
		At step $t+1$, we consider $p_t$. First assume that $p_t\in X$. By Lemma~\ref{lem:degree}, there are at most $\varepsilon n$ vertices in $Y$ with degree less than $(d-\varepsilon)| X \setminus \bar{X}_t|$ into $X\setminus \bar{X}_t$. As we have not stopped the process, $(d-\varepsilon)|X \setminus \bar{X}_t|>\varepsilon n$. As the degree of $p_t$ is bigger than $\varepsilon n$ we can choose $p_{t+1}$ as a neighbour of $p_t$ that has $\varepsilon n$ neighbours in $X\setminus \bar{X}_t$. We then add $p_{t+1}$ to $\bar{Y}_t$ to obtain $\bar{Y}_{t+1}$ and set  $\bar{X}_{t+1}:= \bar{X}_t$.  If  $p_t\in Y$ then we proceed analogously: By Lemma~\ref{lem:degree} there are at most $\varepsilon n$ vertices in $X$ with degree less than $(d-\varepsilon)| Y \setminus \bar{Y}_t|$ into $Y\setminus \bar{Y}_t$. As we have not stopped the process $(d-\varepsilon)|Y  \setminus \bar{Y}_t|>\varepsilon n$. As the degree of $p_t$ is bigger than $\varepsilon n$ we can choose $p_{t+1}$ as a neighbour of $p_t$ that has $\varepsilon n$ neighbours in $Y\setminus \bar{Y}_t$. We then add $p_{t+1}$ to $\bar{X}_t$ to obtain $\bar{X}_{t+1}$ and set  $\bar{Y}_{t+1}:= \bar{Y}_t$.  
		When this process has stopped at vertex $x^* \in X$  we have an anti-directed path of length at least  $(1-\frac{\varepsilon}{d-\varepsilon} - 3 \varepsilon )(|X|+|Y|)$ from $x$ to $x^*$   as we started with $X\setminus \bar{X}_0$ of size at least $n- 2\varepsilon n  -1$  and reduced the size of this set by $1$ at every second step while adding a vertex to the path at every step until we reached a set of size $\varepsilon n/(d-\varepsilon)+1$. We continue this path to $y$ by choosing a neighbour of $x^*$  that is not part of the anti-directed path that is a neighbour of a vertex in $X^*$. As the neighbourhood of $x^*$ that is not part of the anti-directed path is of size at least $\varepsilon n$ and $X^*$ is of size $\varepsilon n$ the density of this set is at least $d-\varepsilon$ and thus there are many possibilities.
	\end{proof}
	

	\begin{theorem} Let  $\varepsilon >0$,  $n,k \in \mathbb{N}$ . For every sufficiently large $n$ and every $k=o(n)$, every oriented graph $D$ on $n$ vertices with minimum degree $n-k$, has an anti-directed path of length at least $(1-\varepsilon)n$.
	\end{theorem}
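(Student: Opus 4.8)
The plan is to combine Szemer\'edi's regularity lemma (Theorem~\ref{thm:szem}) with the ``long path in a regular pair'' result Lemma~\ref{lem:long_path_in_regular_pairs}, building the desired anti-directed path by threading single-pair anti-directed paths along an almost-spanning anti-directed path of a reduced graph. Fix $\varepsilon>0$ and choose constants $0<\varepsilon'\ll\delta\ll\varepsilon$ and, say, $d:=\tfrac13$; apply Theorem~\ref{thm:szem} to $D$ with parameter $\varepsilon'$ and a large $m_0$ to get an $\varepsilon'$-regular partition $V_0,V_1,\dots,V_m$ with $|V_1|=\dots=|V_m|=L$, $|V_0|\le\varepsilon' n$ and $m_0\le m\le M$. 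Because $D$ has minimum degree $n-k$ with $k=o(n)$, while $L\ge n/M$, every vertex misses at most $k=o(L)$ others, so between any two parts $V_i,V_j$ there are at most $kL$ non-adjacent pairs; hence $d(V_i,V_j)+d(V_j,V_i)\ge 1-k/L\ge 1-\varepsilon'$, and in particular the denser of the two directions has density at least $(1-\varepsilon')/2\ge d$.

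Form the reduced oriented graph $R$ on $[m]$ by putting, for every $\varepsilon'$-regular pair $(V_i,V_j)$, one arc in the direction of larger density (ties broken by index), so that every arc of $R$ comes from a regular pair of density at least $d$ in its direction, while non-regular pairs contribute nothing; thus the underlying graph of $R$ misses at most $\varepsilon' m^2$ edges. Deleting the at most $\sqrt{\varepsilon'}m$ parts that lie in more than $2\sqrt{\varepsilon'}m$ non-regular pairs yields an oriented graph $R'$ on $m'\ge(1-\sqrt{\varepsilon'})m$ vertices whose underlying graph has minimum degree at least $(1-4\sqrt{\varepsilon'})m'$, i.e.\ $R'$ is ``almost a tournament''. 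The key step is the claim that $R'$ contains an anti-directed path $W_1W_2\cdots W_\ell$ with $\ell\ge(1-\delta)m'$. This is a robustness (stability) version of Gr\"unbaum's Theorem~\ref{tournament}: an oriented graph whose underlying graph is almost complete still has an almost-spanning anti-directed path. One route is to orient the few non-edges of $R'$ arbitrarily to make a tournament $\widehat R$, invoke Theorem~\ref{tournament} to get a hamiltonian anti-directed path of $\widehat R$, and then reroute around the sparse set of ``fake'' arcs it uses; alternatively one can argue via a median order of $\widehat R$. I expect this to be the main obstacle, and it is the only place where the density hypothesis $k=o(n)$ is really used. We may assume $W_1\to W_2\leftarrow W_3\to W_4\leftarrow\cdots$ (otherwise the roles of sources and sinks below are interchanged, with no effect on the argument), so the odd-indexed parts are ``sources'' and the even-indexed ones ``sinks'': for every even index $2i$ both $W_{2i-1}\to W_{2i}$ and $W_{2i+1}\to W_{2i}$ hold in $R'$, each with density at least $d$, and every consecutive pair $(V_{W_t},V_{W_{t+1}})$ is $\varepsilon'$-regular.

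To realise this reduced path inside $D$, process the ``dominoes'' $(W_1,W_2),(W_3,W_4),\dots$ in order, maintaining a partial anti-directed path ending in a carefully chosen vertex $y_j\in V_{W_{2j}}$ whose last incident arc points \emph{into} $y_j$ and which has at least $2\varepsilon' L$ in-neighbours in $V_{W_{2j+1}}$; the latter is available because $(V_{W_{2j}},V_{W_{2j+1}})$ is a regular pair of density $\ge d$ in the direction $W_{2j+1}\to W_{2j}$, so by Lemma~\ref{lem:degree} all but $\le\varepsilon' L$ vertices of $V_{W_{2j}}$ have $\ge(d-\varepsilon')L\ge2\varepsilon' L$ such in-neighbours. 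Pick among those in-neighbours a vertex $x_{j+1}\in V_{W_{2j+1}}$ which is also a ``good start'' for $(V_{W_{2j+1}},V_{W_{2j+2}})$, i.e.\ has $\ge(d-\varepsilon')L$ out-neighbours in $V_{W_{2j+2}}$ (all but $\le\varepsilon' L$ vertices qualify, so this is possible), and append $x_{j+1}$: the arc $x_{j+1}\to y_j$ points into $y_j$ just as the previous arc did, so the extension is anti-directed. Now apply Lemma~\ref{lem:long_path_in_regular_pairs} to the $\varepsilon'$-regular pair $(V_{W_{2j+1}},V_{W_{2j+2}})$, whose density in the direction $W_{2j+1}\to W_{2j+2}$ is at least $d\ge5\varepsilon'$, with start vertex $x_{j+1}$, a ``good'' end vertex $y_{j+1}\in V_{W_{2j+2}}$ chosen (as above) to also have many in-neighbours in $V_{W_{2j+3}}$, and with forbidden sets the relevant exceptional sets from Lemma~\ref{lem:degree} (of size $<\varepsilon' L$, or a constant multiple, absorbed into the lemma's parameter); this gives an anti-directed path from $x_{j+1}$ to $y_{j+1}$ covering at least $(1-O(\varepsilon'))2L$ vertices of $V_{W_{2j+1}}\cup V_{W_{2j+2}}$, all of whose arcs go from the $W_{2j+1}$-side into the $W_{2j+2}$-side, so it continues the current anti-directed path and again ends with its last arc pointing into $y_{j+1}$. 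Starting at domino $(W_1,W_2)$ from an arbitrary good vertex $x_1\in V_{W_1}$ and iterating over all dominoes (discarding $W_\ell$ if $\ell$ is odd) produces a single anti-directed path in $D$.

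Finally, bound the uncovered vertices: $|V_0|\le\varepsilon' n$; the deleted parts give at most $\sqrt{\varepsilon'}n$; the at most $\delta m'$ parts of $R'$ off the reduced path give at most $\delta n$; and Lemma~\ref{lem:long_path_in_regular_pairs} wastes an $O(\varepsilon')$-fraction of each domino, totalling $O(\varepsilon')n$. Since $\varepsilon'\ll\delta\ll\varepsilon$ and $k/L\to0$ as $n\to\infty$, the total loss is below $\varepsilon n$, so the path has at least $(1-\varepsilon)n$ vertices. The hard part is the claimed robust form of Gr\"unbaum's theorem for nearly-complete oriented graphs; the remaining steps are standard regularity-method bookkeeping, the one delicacy being to check that the various ``good vertex'' conditions at the domino endpoints and at the jumps can all be met simultaneously throughout the construction.
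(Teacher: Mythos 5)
There is a genuine gap at the step you yourself flag as the main obstacle: the claim that the nearly-complete reduced oriented graph $R'$ contains an anti-directed path on $(1-\delta)m'$ vertices. This is not a routine ``robustness'' upgrade of Gr\"unbaum's theorem. Your first suggested route --- complete $R'$ to a tournament $\widehat R$, take a hamiltonian anti-directed path by Theorem~\ref{tournament}, and ``reroute around'' the fake arcs --- does not obviously work: the hamiltonian anti-directed path of $\widehat R$ could a priori use a constant fraction of fake arcs, and repairing an anti-directed path locally is delicate because the parity/orientation pattern along the path is rigid (replacing one arc changes the source/sink role required of its endpoints). Your second route (median orders) is equally unsubstantiated. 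Note also that greedy extension fails here since an almost-complete underlying graph gives no lower bound on semi-degrees in $R'$, so at a given endpoint you may have no arc of the required direction at all. As written, the argument rests on an unproven lemma that is itself of the same flavour and difficulty as the theorem being proved.

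The paper avoids this entirely, and the fix is worth internalising: instead of an anti-directed path in the reduced digraph, it takes a near-perfect \emph{matching} in the undirected reduced graph (which is nearly complete, so such a matching trivially exists via the classical decomposition of $K_\ell$ into near-perfect matchings), applies Lemma~\ref{lem:long_path_in_regular_pairs} inside each matched pair, and then concatenates the resulting long anti-directed paths by jumping between consecutive pairs \emph{inside $D$ itself}, using the minimum degree $n-k$ with $k=o(n)$ to find either a single connecting arc of the right orientation or a short detour $v\leftarrow u\to x\leftarrow y$ between vertices of large degree. This uses no structural result about the reduced digraph beyond near-completeness of its underlying graph. Your domino-threading mechanics (choosing endpoints with large degree into the next cluster via Lemma~\ref{lem:degree}, and checking the orientation pattern at the junction vertices) are sound and essentially match the paper's bookkeeping; if you replace the reduced anti-directed path by a matching and connect the dominoes through $D$ as above, your argument closes.
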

	\begin{proof} 
		Let $\varepsilon' < \varepsilon/10$. We apply Theorem~\ref{thm:szem} with $\varepsilon'$ (and $m=1$) to obtain an $\varepsilon'$-regular partition $V_0,\ldots, V_\ell$. We consider the undirected simple reduced graph $R$ which has a vertex for each $V_1, \ldots, V_\ell$ and an edge between $V_i, V_j$ if $(V_i,V_j)$  or $(V_j,V_i)$ form an $\varepsilon'$-regular pair of density at least $1/2-\varepsilon'$. Note that there are at least $|V_i|(|V_j|-k)$ arcs between $V_i$ and $V_j$ in either direction and it follows that for sufficiently large $n$ at least one of the pairs $(V_i,V_j)$ of $(V_j,V_i)$ have density at least $d=1/2-\varepsilon'$. Thus $R$ is a complete graph with at most $\varepsilon' l^2$ edges missing. 
		It is well known that a complete graph can be decomposed into (nearly) perfect matchings (in fact, a complete graph with an even number of vertices can be decomposed into perfect matchings and an old result attributed to Walecki by  Lucas in 1892 \cite{lucas} says that every complete graph with an odd number of vertices can be decomposed into hamiltonian cycles).
		It follows that $R$ must have one matching with at least $\frac{{\ell \choose 2}-\varepsilon' \ell^2 }{\ell}= \frac{(1-2\varepsilon')\ell-1}{2}$ many edges. Consider one of these matchings. Each edge of this matching corresponds to an $\varepsilon'$-regular (directed) pair $(V_i,V_j)$ of density at least $d$. Let $(A_1,B_1),\ldots,(A_t,B_t)$ be these $\varepsilon'$-regular pairs of density at least $d$. 
		
		We know from Lemma~\ref{lem:long_path_in_regular_pairs} that $(A_i,B_i)$ contain long anti-directed paths and we now choose the starting and end vertex for each pair in a way that we can connect the anti-paths in $(A_i,B_i)$ and $(A_{i+1},B_{i+1})$ to get a almost hamiltonian anti-path in $D$.
		For $a_1\in A_1$ we choose any vertex with out-degree into $B_1$ at least $(d-\varepsilon') |V(B_1)|$. By Lemma~\ref{lem:degree} many such vertices exist. For $i=1,\ldots,t-1$, we consider two cases: Either there exists an arc $(v,u)$ between a vertex $u$ of $B_i$ of in-degree at least  $(d-\varepsilon') |V(A_i)|$ from $A_i$  and a vertex $v$ of  $A_{i+1}$ of out-degree at least  $(d-\varepsilon') |V(B_{i+1})|$ into $B_{i+1}$, in which case we set $a_{i+1}=v$ and $b_i=u$; or all arcs between vertices of sufficiently large degree are directed from $B_i$ to $A_{i+1}$. In the latter case, we choose an arc $(u,v)$ in $B_i$ between vertices of in-degree at least $(d-\varepsilon') |V(A_i)|$ from $A_i$. And then we choose an arc $(x,y)$ within the subgraph induced by $u$'s out-neighbours in vertices in $A_{i+1}$ with out-degree at least $(d-\varepsilon')|V(B_{i+1})|$ into $B_{i+1}$. Note that such arcs exist as by Lemma~\ref{lem:degree} the set of these vertices is much larger than $k$ and the minimum degree is $n-k$. In addition, the path $v\leftarrow u \to x \leftarrow y$ can connect any $(a_i,v)$ anti-path in $(A_i,B_i)$ obtained by Lemma \ref{lem:long_path_in_regular_pairs} and any anti-path in $(A_i,B_i)$ starts at $y$. Thus,
		we choose $b_i=v$, $a_{i+1}=x$ and set $B_i'=\{u\}$ and  $A'_{i+1}=\{y\}$ when applying Lemma \ref{lem:long_path_in_regular_pairs}. Finally we choose $b_t$ any vertex with in-degree into $A_t$ at least  $(d-\varepsilon') |V(A_t)|$. Again by Lemma~\ref{lem:degree} many such vertices exist. 
		
		Therefore, we formed an anti-path with order at least
		\[\frac{(1-2\varepsilon')\ell-1}{2}\times(1-\frac{\varepsilon'}{d -  \varepsilon'}-3\varepsilon')(\frac{2(1-\varepsilon')n}{\ell})\geq(1-9\varepsilon')(1-\varepsilon')n>(1-\varepsilon)n,\]
		which completes the proof.
	\end{proof}

	\maketitle

\end{document}